\theoremstyle{definition}
\newtheorem{definition}{Definition}[section]
\newtheorem{example}[definition]{Example}
\theoremstyle{plain}
\newtheorem{theorem}[definition]{Theorem}
\newtheorem{prop}[definition]{Proposition}
\newtheorem{cor}[definition]{Corollary}
\newtheorem{lemma}[definition]{Lemma}
\theoremstyle{remark}
\newcommand{\cF}{\mathcal{F}}
\newcommand{\cE}{\mathcal{E}}
\newcommand{\cU}{\mathcal{U}}
\newcommand{\cV}{\mathcal{V}}
\newcommand{\cC}{\mathcal{C}}
\newcommand{\sA}{\mathsf{A}}
\newcommand{\sO}{\mathsf{O}}
\newcommand{\RR}{\mathbb{R}}
\newcommand{\real}{\mathbb{R}}
\newcommand{\eps}{\varepsilon}
\renewcommand{\natural}{\mathbb{N}}
\newcommand{\dd}{\, \mathrm{d}}
\newcommand{\one}[1]{\ensuremath{\mathbbm{1}}(#1)}
\DeclareMathOperator{\intr}{int}
\title{Why scoring functions cannot assess tail properties}
\author{Jonas Brehmer\footnote{Institute of Mathematics, University of Mannheim, 68131 Mannheim, Germany. Email: jbrehmer@mail.uni-mannheim.de} {} and Kirstin Strokorb\footnote{School of Mathematics, Cardiff University, Cardiff CF10 4AG, UK. Email: strokorbk@cardiff.ac.uk}}
\date{October 7, 2019}
\begin{document}

\maketitle

\begin{abstract}
  Motivated by the growing interest in sound forecast evaluation techniques with an emphasis on distribution tails rather than average behaviour, we investigate a fundamental question arising in this context: Can statistical features of distribution tails be elicitable, i.e.\ be the unique minimizer of an expected score? We demonstrate that expected scores are not suitable to distinguish genuine tail properties in a very strong sense. Specifically, we introduce the class of max-functionals, which contains key characteristics from extreme value theory, for instance the extreme value index. We show that its members fail to be elicitable and that their elicitation complexity is in fact infinite under mild regularity assumptions. Further we prove that, even if the information of a max-functional is reported via the entire distribution function, a proper scoring rule cannot separate max-functional values. These findings highlight the caution needed in forecast evaluation and statistical inference if relevant information is encoded by such functionals.
\end{abstract}

{\small
\noindent 
\textit{Keywords}:  Elicitability, elicitation complexity, extreme value index, max-functional, proper scoring rule, scoring function, strict consistency, tail equivalence.
\smallskip\\
  \noindent \textit{2010 MSC}: {Primary 62C05 (62C99); 62G32}\\ 
  \phantom{\textit{2010 MSC}:} {Secondary 91B06; 91B30} 
}

\section{Introduction}

Many of our day-to-day decisions rely on our ability to produce reasonable forecasts for quantities of interest.
For example, production planning involves forecasts on consumer demand, decisions in farming depend on information about the likely weather conditions and financial risk management uses statistical features of portfolio losses. Usually, such quantities are modelled via a random variable $Y$ having an unknown probability distribution and the reasonable actions of a decision maker depend on the properties of this distribution. Forecasts can encode such properties via real numbers, e.g.\ means or quantiles of the distribution, via sets, e.g.\ a confidence interval, or by a report of the whole distribution function.

When several competing forecasts are available, a crucial problem is to determine which one is most valuable. A principled approach to this task is to compare the forecasts to a set of realizations of $Y$ via a scoring rule or a scoring function, see e.g. \citet{GneitRaft2007} and \citet{Gneit2011}. A \textit{scoring function} assigns a real-valued score based on a forecast and a realizing observation. If a functional, i.e.\ a statistical property, of a distribution is the unique minimizer of the expected score with respect to this distribution, it is called \textit{elicitable}.
Elicitability is a desirable property for comparative forecast evaluation, where it can be used to incentivize risk-neutral forecasters to report their beliefs \citep{Gneit2011}. Moreover, elicitable functionals enable regression and M-estimation \citep{FissZieg2016,Gneit2011} and are central to various machine learning algorithms \citep{Steinetal2014,FronKash2018}. Recent theoretical advances on scoring functions and elicitability in the real-valued case can be found in \citet{Lambetal2008}, \citet{Gneit2011} and \citet{Steinetal2014}. More general vector-valued functionals are treated in \citet{FronKash2015b,FronKash2018} and \citet{FissZieg2016,FissZieg2019}.

Many statistical functionals such as expectations, quantiles, and expectiles are elicitable and there exist convenient characterizations of the corresponding classes of consistent scoring functions, cf.\ \citet{Gneit2011} and the references therein.  On the other hand, several widely considered functionals fail to be elicitable, for instance the variance, the mode \citep{Hein2014} and the prominent financial risk measure Expected Shortfall (ES) \citep{Weber2006,Gneit2011}. The non-elicitability of the latter functional can be addressed via more general notions of elicitability: \citet{FissZieg2016} show that ES is \textit{jointly elicitable} with the risk measure Value at Risk (VaR), where the latter is simply an extreme quantile. In other words, ES has \textit{elicitation complexity} equal to two in the sense of \citet{FronKash2018}. In this particular instance the elicitability problems associated with ES can be resolved, at the cost of considering a higher dimensional problem.

More generally, there is a recent growing interest in sound forecast evaluation techniques with an emphasis on distribution tails rather than average behaviour. 
For instance, \citet{FriedThor2012} investigate the use of scoring rules for distribution classes central to extreme value theory, and \citet{Diksetal2011}, \citet{Lerchetal2017} as well as \citet{HolzKlar2018} consider weighted scoring rules for forecasts of distribution tails.
An event-based approach to evaluate whether exceedances of high thresholds are predicted correctly is pursued by \citet{Stephetal2008} and \citet{FerroSteph2011}.
Closely connected is the verification tool of \citet{Tailletal2019} which is based on the asymptotic behavior of the continuously ranked probability score (CRPS), conditional on high realizations.
A fundamental question arising in this context is to what extent, and in which sense, statistical features of distribution tails are elicitable. The latter problem is the central theme of this manuscript.

In our approach to this question we introduce the concept of max-functionals which naturally arises from a key feature shared by the statistical functionals that are typically considered in extreme value theory. We demonstrate that max-functionals fail to be elicitable in a very strong sense. Consequently, it is natural to ask whether part of the problem can be mitigated by abandoning point forecasts in favor of reports of the entire distribution function. In this regard we generalize a result by \citet{Tailletal2019} and show that it is an inherent property of \emph{all} proper scoring rules that they cannot perfectly distinguish among different max-functional values.

The manuscript is organized as follows. In Section~\ref{sec:3concepts} we review the three notions of elicitability that are used in the recent literature. Section~\ref{sec:main} introduces the class of max-functionals and shows that they cannot be elicitable and that their elicitation complexity is infinite under mild assumptions. Section~\ref{sec:examples} provides examples of widely used max-functionals. In Section~\ref{sec:scoringrules} we turn to reports of entire distributions. We show that arbitrary large differences in tail behaviour, either quantified by tail equivalence or max-functionals, can remain undetected by proper scoring rules. Section~\ref{sec:discuss} concludes with a discussion of the results.

\section{Prerequisites: Elicitability and elicitation complexity}
\label{sec:3concepts}

For the reader's convenience this section recalls the central definitions of elicitability  and reviews basic findings. A more detailed overview of the existing literature is given in  \citet{FissZieg2016} and \citet{Gneit2011}, whose notation we follow here. Let $\sO \subseteq \real^d$ be a fixed set, called \textit{observation domain}, equipped with Borel $\sigma$-algebra $\mathcal{O}$. We use $\cF$ to denote a collection of probability distributions on $(\sO, \mathcal{O})$, whilst also identifying probability distributions with their cumulative distribution functions. A \textit{functional} will be a mapping $T: \cF \rightarrow \sA$ where $\sA \subseteq \real^n$ is called \textit{action domain}. A measurable function $g: \sO \rightarrow \real$ is called \textit{$\cF$-integrable} if it is integrable with respect to all $F \in \cF$. Analogously, a function $g: \sA \times \sO \rightarrow \real$ is called $\cF$-integrable if for all $x \in \sA$ the function $y \mapsto g(x,y)$ is integrable with respect to all $F \in \cF$. We use the short notation
\begin{align*}
\bar h (F) := \int_{\sO} h(y) \dd F(y) \quad \text{ and } \quad \bar{g} (x, F) := \int_{\sO} g(x,y) \dd F(y)
\end{align*}
for $\cF$-integrable functions $h,g$ and $x \in \sA$, $F \in \cF$.

\paragraph{Scoring functions and elicitability}

In the following, $S: \mathsf{A} \times \mathsf{O} \rightarrow \mathbb{R}$ denotes a \textit{scoring function}, i.e.\ an $\cF$-integrable function. The central concepts connecting scoring functions and statistical functionals are consistency and elicitability.

\begin{definition}[Consistency]
A scoring function $S: \mathsf{A} \times \mathsf{O} \rightarrow \mathbb{R}$ is $\mathcal{F}$-\textit{consistent} for a functional $T: \cF \rightarrow \sA $ if for all $x \in \mathsf{A}$ and $F \in \mathcal{F}$ we have $\bar{S}(x, F) \geq \bar{S}(T(F), F)$. It is called \textit{strictly} $\mathcal{F}$-\textit{consistent} for $T$ if it is $\mathcal{F}$-consistent for $T$ and for all $x \in \mathsf{A}$ and $F \in \mathcal{F}$ the equality $\bar{S}(x,F) = \bar{S}(T(F),F)$ implies $x = T(F)$.
\end{definition}

\begin{definition} [(Joint) elicitability] \label{def:jointlyelicitable}
A functional $T : \cF \rightarrow \sA \subseteq \real^n$ is called \textit{elicitable} if there exists a strictly $\cF$-consistent scoring function for $T$. It is called \textit{jointly elicitable with the functional} $T' : \cF \rightarrow \sA' \subseteq \real^k$ if $(T, T')$ is an elicitable functional.
\end{definition}

An important necessary condition that a statistical functional needs to satisfy in order to be elicitable is \textit{convexity of level sets}, which goes back to \citet{Osband1985}, cf.\ for instance \citet[Theorem~6]{Gneit2011} and \citet[Lemma~1]{Lambetal2008} for a proof.

\begin{theorem}[Convexity of level sets]  \label{th:convexlevel}
Let $T : \cF \rightarrow \sA$ be an elicitable functional. If $F_0, F_1 \in \cF$ and $\lambda \in (0,1)$ are such that $F_\lambda = \lambda F_1 + (1-\lambda) F_0 \in \cF$, then $T(F_0) = T(F_1) = t$ implies $T(F_\lambda) = t$.
\end{theorem}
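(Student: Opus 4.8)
The plan is to exploit the single structural fact that makes elicitability interact well with mixtures: the map $F \mapsto \bar S(x, F)$ is linear in $F$ for every fixed $x$. Since $T$ is elicitable, fix a strictly $\cF$-consistent scoring function $S$ for $T$. Because $F_0, F_1, F_\lambda$ all lie in $\cF$, the function $S$ is integrable against each of them, and for every $x \in \sA$ the defining integral gives
\begin{align*}
\bar S(x, F_\lambda) = \lambda \, \bar S(x, F_1) + (1-\lambda)\, \bar S(x, F_0).
\end{align*}
This identity is the only analytic input needed; everything else is an inequality-chasing argument.

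Next I would argue by contradiction. Suppose $T(F_\lambda) = x^* \neq t$. Strict $\cF$-consistency of $S$ for $T$, applied at the distribution $F_\lambda$, yields $\bar S(t, F_\lambda) > \bar S(x^*, F_\lambda)$, the inequality being strict precisely because $t \neq x^* = T(F_\lambda)$. On the other hand, $\cF$-consistency of $S$ at $F_0$ and $F_1$, together with the hypothesis $T(F_0) = T(F_1) = t$, gives $\bar S(x^*, F_0) \geq \bar S(t, F_0)$ and $\bar S(x^*, F_1) \geq \bar S(t, F_1)$.

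Combining these two bounds via the linearity identity above, I get
\begin{align*}
\bar S(x^*, F_\lambda) = \lambda\, \bar S(x^*, F_1) + (1-\lambda)\, \bar S(x^*, F_0) \geq \lambda\, \bar S(t, F_1) + (1-\lambda)\, \bar S(t, F_0) = \bar S(t, F_\lambda),
\end{align*}
which directly contradicts $\bar S(t, F_\lambda) > \bar S(x^*, F_\lambda)$. Hence the assumption $T(F_\lambda) \neq t$ is untenable and $T(F_\lambda) = t$, as claimed.

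There is no real obstacle here: the argument is a two-line convexity/linearity manipulation, and the only point deserving a remark is that the mixture $F_\lambda$ must itself belong to $\cF$ for $S$ to be a legitimate scoring function and for the consistency inequalities to apply at $F_\lambda$ — which is exactly the hypothesis $F_\lambda \in \cF$ that the statement builds in. One could also phrase the proof without contradiction, showing $\bar S(x, F_\lambda) \geq \bar S(t, F_\lambda)$ for all $x$ with equality forcing $x = t$, but the contradiction form keeps the bookkeeping minimal.
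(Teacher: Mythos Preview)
Your proof is correct and is precisely the standard argument. The paper does not actually prove Theorem~\ref{th:convexlevel} itself; it only cites \citet[Theorem~6]{Gneit2011} and \citet[Lemma~1]{Lambetal2008}, whose proofs proceed exactly as yours does --- via linearity of $F \mapsto \bar S(x,F)$ combined with (strict) consistency at $F_0$, $F_1$ and $F_\lambda$.
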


\begin{example}  \label{ex:meanvar}
The simplest example of an elicitable functional is the mean of a distribution. More precisely, let $g: \sO \rightarrow \real$ be such that $g$ and $g^2$ are $\cF$-integrable and define $T: \cF \rightarrow \real$ via $T(F) = \bar g (F)$. Then $T$ is elicitable with a strictly $\cF$-consistent scoring function given by $S(x,y) = (x - g(y))^2$, the ubiquitous squared error loss. Likewise, the moment functionals defined via $T_k (F) := \int y^k \dd F(y)$ for $k \in \natural$ are elicitable.

A simple example of a non-elicitable functional is the variance functional $T_\mathrm{var}(F) := T_2(F) - T_1(F)^2$, whose non-elicitability follows directly from Theorem~\ref{th:convexlevel}. Nevertheless, $T_\mathrm{var}$ is jointly elicitable since the vector $(T_1, T_\mathrm{var} )$ can be obtained from the elicitable vector $(T_1, T_2)$ via a bijection and hence it is elicitable, see e.g.\ \citet[Theorem~4]{Gneit2011}. Another notable property is that on every subset of $\cF$ where $T_1$ is constant, $T_\mathrm{var}$ reduces to a shifted version of the second moment $T_2$ and is thus elicitable on this subset. That is, $T_\mathrm{var}$ is conditionally elicitable given $T_1$ in the following sense.
\end{example}

\begin{definition}[Conditional elicitability] \label{def:conditionale}
Let $T: \mathcal{F} \rightarrow \sA \subseteq \real^n$  and $T' : \cF \rightarrow \sA' \subseteq \real^k$ be functionals and let $T'$ be elicitable. For any $x \in \sA'$ define the set
\begin{align*} 
\mathcal{F}_{ x } := \lbrace F \in \mathcal{F} \mid T'(F) = x \rbrace .
\end{align*}
Then the functional $T$ is called \textit{conditionally elicitable given} $T'$ if for any $x \in \sA'$ its restriction to the class $\cF_x$ is elicitable.
\end{definition}

The concept of conditional elicitability was first introduced by \citet{Emmetal2015} and  motivated by a conditional backtesting approach for Expected Shortfall (ES) forecasts. A slight generalization was given by \citet{FissZieg2016}. Our definition coincides with the one from \citet{FissZieg2016} except that we drop the condition that $T'$ has elicitable components and only require it to be elicitable. This allows for a more convenient presentation of our results below.

Neither joint elicitability nor conditional elicitability imply elicitability, which follows from Example~\ref{ex:meanvar} with the variance functional serving as a counterexample. If a functional $T$ is jointly elicitable with the functional $T'$, and $T'$ is elicitable, then it is conditionally elicitable given $T'$. Conversely, as discussed in \citet{FissZieg2016}, it is unclear under which conditions a conditionally elicitable functional is jointly elicitable.

\paragraph{Elicitation complexity}

The definitions of joint elicitability and conditional elicitability both require a second elicitable functional $T'$ accompanying the functional of interest. The distinction between both functionals is made more explicit in the concept of \textit{elicitation complexity}. To illustrate this, recall Example~\ref{ex:meanvar} and note that the variance functional satisfies $T_\mathrm{var} = f(T_1, T_2)$, where $ f(x_1, x_2) = x_2 - x_1^2$. Since $T_1$ and $T_2$ are elicitable, we say that the variance functional has complexity 2. In general, $T$ has elicitation complexity at most $k$ if there is an elicitable functional $T' : \cF \rightarrow \sA' \subseteq \real^k$ such that $T = f(T')$ holds. Any $f$ and $T'$ satisfying this condition are then called \textit{link function} and \textit{intermediate functional}, respectively. The smallest dimension $k$ for which such a representation is feasible is the elicitation complexity.

\begin{definition}[Elicitation complexity]   \label{def:eliccomplex}
For any set of distribution functions $\cF$ the set of $\real^k$-valued elicitable functionals defined on $\cF$ is denoted via $\cE_k (\cF)$. For a functional $T: \mathcal{F} \rightarrow \sA \subseteq \real$ and sets $\cC_k \subseteq \cE_k (\cF)$ the \textit{elicitation complexity of $T$ with respect to} $(\cC_k)_{k \in \natural}$ is defined via
\begin{align*}
\mathsf{elic} (T) := \min \lbrace k \in \natural \mid \exists \, T' \in \cC_k : \, T = f \circ T' \text{ for some } f: T'(\cF) \rightarrow \sA \rbrace .
\end{align*}
If the minimum is not attained for any $k \in \natural$, the elicitation complexity of $T$ with respect to $(\cC_k)_{k \in \natural}$ is infinite and we write $\mathsf{elic}(T) = \infty$. 
\end{definition}

Elicitation complexity was introduced by \citet{Lambetal2008} and further analyzed in \citet{FronKash2018}, the latter motivated by its role in empirical risk minimization (ERM) algorithms in machine learning. Intuitively speaking, it replaces the question \emph{whether} a functional is elicitable by the question
\textit{how complex} it is to elicit the functional.

If no regularity conditions are imposed on $f$ or $T'$, this can lead to small complexities without clear benefits in applications. More precisely, if $f$ is arbitrary and  $\cC_k = \cE_k (\cF)$ is chosen, pathological choices of $f$, like bijections from $\real^k$ to $\real$, cause all functionals to have complexity 1, as demonstrated by \citet[Remark~2]{FronKash2018}. To avoid such problems, it is standard to choose suitable subclasses $\cC_k$ of intermediate functionals. One possible choice, which is used by \citet{FronKash2018} as well as \citet{DearFron2019}, is $\cC_k := \mathcal{I}_k (\cF) \cap \cE_k (\cF)$, where $\mathcal{I}_k (\cF)$ is the set of $\real^k$-valued identifiable functionals on $\cF$. Another possibility, implicitly used by \citet{Lambetal2008} is to define $\cC_k$ to be a subclass of all functionals which have elicitable components.

Lastly, it is also possible to impose regularity on the link function $f$, e.g.\ by requiring differentiability or continuity. Notably, joint elicitability can be understood as a version of elicitation complexity where the link function is the projection on the last component \citep{FronKash2018}.

We need to be cautious when interpreting elicitation complexity, since imposing different regularity conditions via $(\cC_k)_{k \in \natural}$ can lead to different elicitation complexities for the same functional, see \citet[Subsection~2.2]{FronKash2018} for an example. In particular, some $\real^k$-valued functional might be elicitable and simultaneously have elicitation complexity strictly greater than $k$. Conversely, a functional can have elicitation complexity 1, although it is not itself elicitable, as illustrated in \citet[Remark~1]{FronKash2018}.

We conclude this section with a lemma which considers the properties of a functional $T$ if it is restricted to some subclass $\cF_2 \subseteq \cF$. The first statement corresponds to the first part of Lemma~2.11 of~\citet{FissZieg2015}, the second and third statement are simple extensions. Their proofs are straightforward and therefore omitted.

\begin{lemma}  \label{lem:SmallerSets}
Let $T: \cF \rightarrow \sA$ be a functional and let $\cF_2 \subseteq \cF$ be non-empty.
\begin{enumerate}[label=(\alph*)]
\item If $T$ is elicitable, then the restricted functional $T_{\vert \cF_2}$ is elicitable.

\item If $\mathsf{elic}(T) = k$ with respect to $(\cC_k)_{k \in \natural}$ and we define $\cC_k^2 := \lbrace T'_{\vert \cF_2} \mid T' \in \cC_k \rbrace$, then $\mathsf{elic}(T_{\vert \cF_2} ) \leq k$ with respect to $(\cC_k^2)_{k \in \natural}$.

\item If $\mathsf{elic}(T) = k$ with respect to $(\cC_k)_{k \in \natural}$ and sets $(\cC_k')_{k \in \natural}$ satisfy $\cC_k \subseteq \cC_k'$ for all $k \in \natural$, then $\mathsf{elic}(T) \leq k$ with respect to $(\cC_k')_{k \in \natural}$.
\end{enumerate}
\end{lemma}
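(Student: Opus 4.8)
The plan is to handle the three parts in order, each one reducing to unwinding the relevant definition.

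For part (a), I would take a strictly $\cF$-consistent scoring function $S:\sA\times\sO\to\real$ for $T$, whose existence is guaranteed by elicitability, and check that the \emph{same} $S$, viewed with the unchanged action domain $\sA$, witnesses elicitability of $T_{\vert\cF_2}$. Since $\cF_2\subseteq\cF$, the function $S$ is in particular $\cF_2$-integrable, and for every $F\in\cF_2$ and $x\in\sA$ the inequality $\bar S(x,F)\ge\bar S(T(F),F)=\bar S(T_{\vert\cF_2}(F),F)$ holds because it holds for all $F\in\cF$; the strictness clause transfers verbatim. Hence $T_{\vert\cF_2}$ is elicitable. The only thing to watch is that the action domain is not shrunk, so that $T_{\vert\cF_2}$ still maps into $\sA$ and the statement about the image is harmless.

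For part (b), I would start from a representation $T=f\circ T'$ realizing $\mathsf{elic}(T)=k$, with $T'\in\cC_k\subseteq\cE_k(\cF)$ and $f:T'(\cF)\to\sA$. Restricting everything to $\cF_2$ yields $T_{\vert\cF_2}=f_{\vert T'(\cF_2)}\circ T'_{\vert\cF_2}$, and $T'_{\vert\cF_2}\in\cC_k^2$ by construction (here $T'(\cF_2)\subseteq T'(\cF)$, so the restricted link function is well defined). The one point that actually uses part (a): for $(\cC_k^2)_{k\in\natural}$ to be an admissible family of intermediate classes in Definition~\ref{def:eliccomplex} we need $\cC_k^2\subseteq\cE_k(\cF_2)$, and every element of $\cC_k^2$ has the form $T'_{\vert\cF_2}$ with $T'$ elicitable, hence is elicitable by part (a). Therefore $T_{\vert\cF_2}$ admits a representation through an intermediate functional lying in $\cC_k^2$, which gives $\mathsf{elic}(T_{\vert\cF_2})\le k$ with respect to $(\cC_k^2)_{k\in\natural}$.

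Part (c) is immediate: a representation $T=f\circ T'$ with $T'\in\cC_k$ realizing $\mathsf{elic}(T)=k$ is, since $\cC_k\subseteq\cC_k'$, also a representation with $T'\in\cC_k'$, so the minimum defining the elicitation complexity of $T$ with respect to $(\cC_k')_{k\in\natural}$ is at most $k$ (tacitly using, as throughout, that the $\cC_k'$ are themselves admissible, i.e.\ $\cC_k'\subseteq\cE_k(\cF)$). Since every step is a direct consequence of the definitions, there is no genuine obstacle; the subtlest point is the bookkeeping in part (b), namely keeping the restricted link function on the correct domain and verifying that $\cC_k^2$ is an admissible family, which is precisely where part (a) is invoked.
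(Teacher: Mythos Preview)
Your proposal is correct and is exactly the straightforward unwinding of the definitions that the paper has in mind; indeed, the paper explicitly omits the proof, stating that ``their proofs are straightforward and therefore omitted.'' Your observation in part~(b) that $\cC_k^2\subseteq\cE_k(\cF_2)$ relies on part~(a) is a nice bit of bookkeeping that the paper does not spell out.
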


\section{The elicitation complexity of max-functionals} \label{sec:main}

This section introduces max-functionals, the central objects of our study, and investigates their elicitability as well as their elicitation complexity. Henceforth, let $\cF$ always denote a \textit{convex} class of distributions.

\begin{definition}
  A functional $T: \cF \rightarrow \real$ is called \textit{max-functional} if
  \begin{align*}
  T(\lambda F_1 + (1-\lambda) F_0) = \max (T(F_0), T(F_1) )
  \end{align*}
  holds for all $F_0, F_1 \in \cF$ and $\lambda \in (0,1)$.
\end{definition}

The essential feature of a max-functional is that its value on convex combinations of distributions is determined by the values attained on the extreme points. Equivalently, we can also define min-functionals and all results carry over with minor modifications. The constant functional is the simplest max-functional, but we will usually not be interested in this trivial case. Instead, Section~\ref{sec:examples} collects some non-trivial examples of max-functionals that are routinely considered in extreme value theory. Also note that, by definition, restrictions of max-functionals to a certain set of values are again max-functionals.

\begin{lemma}
Let $T: \cF \to \RR$ be a max-functional and $A \subset \RR$ a set. Set $\cF_A:=\lbrace F \in \cF \mid T(F) \in A \rbrace$, then $\cF_A$ is convex and the restricted functional $T: \cF_A \to A \subset \RR$ is also a max-functional.
\end{lemma}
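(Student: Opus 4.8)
The plan is to verify the two claims directly from the defining identity of a max-functional, using only that $\cF$ is assumed convex.

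First I would check convexity of $\cF_A$. Take $F_0, F_1 \in \cF_A$ and $\lambda \in (0,1)$, and set $F_\lambda := \lambda F_1 + (1-\lambda) F_0$. Since $\cF$ is convex, $F_\lambda \in \cF$, so $T(F_\lambda)$ is defined and the max-functional property gives $T(F_\lambda) = \max(T(F_0), T(F_1))$. The key (and essentially only) observation is that this maximum equals either $T(F_0)$ or $T(F_1)$, both of which lie in $A$ by the definition of $\cF_A$; hence $T(F_\lambda) \in A$ and therefore $F_\lambda \in \cF_A$. Note that no structural assumption on $A$ (such as being an interval) is needed, precisely because the maximum of two numbers is attained at one of them.

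Next I would observe that $T$ restricts to a well-defined map $\cF_A \to A$, since $T(F) \in A$ for every $F \in \cF_A$ by construction. Finally, the restricted functional is again a max-functional: for $F_0, F_1 \in \cF_A \subseteq \cF$ and $\lambda \in (0,1)$, the convex combination $F_\lambda$ lies in $\cF_A$ by the previous step, and the identity $T(F_\lambda) = \max(T(F_0), T(F_1))$ is inherited verbatim from the max-functional property of $T$ on the larger class $\cF$.

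There is no genuine obstacle here; the single point worth flagging is the elementary fact $\max(a,b) \in \{a,b\}$, which is exactly what makes $\cF_A$ closed under convex combinations for an arbitrary set $A$. All remaining steps are immediate from the definitions, which is presumably why the authors leave the proof to the reader.
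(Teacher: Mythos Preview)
Your proposal is correct and matches what the paper intends: the authors state this lemma without proof, and your argument---applying the max-functional identity and noting that $\max(T(F_0),T(F_1)) \in \{T(F_0),T(F_1)\} \subseteq A$---is exactly the routine verification they leave to the reader.
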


\paragraph{Non-elicitability of max-functionals}

We start by proving that max-func\-tionals cannot be elicitable. As remarked in Section~\ref{sec:3concepts} the usual way to show that a functional is not elicitable consists of applying Theorem~\ref{th:convexlevel}, i.e.\ showing that it fails to have convex level sets. However, any max-functional has convex level sets by definition. So this approach is not feasible, as in the case of the mode functional \citep{Hein2014}. Instead, we employ the following new criterion.

\begin{theorem} \label{th:Criterion2}
  Let $T: \cF \rightarrow \sA$ be a functional. If there are $F_0, F_1 \in \mathcal{F}$ such that $T(F_0) \neq T(F_1)$ and
  \begin{align*}
    T( \lambda F_1 + (1-\lambda) F_0 ) \in \lbrace T(F_0), T(F_1) \rbrace
    \quad \text{for all } \lambda \in (0,1),
  \end{align*}
  then $T$ is not elicitable. 
\end{theorem}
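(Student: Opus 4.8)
The plan is to argue by contradiction: suppose $T$ is elicitable, and derive a contradiction with the hypothesis that $T$ takes only the two values $T(F_0)$ and $T(F_1)$ along the segment joining $F_0$ and $F_1$, while $T(F_0) \neq T(F_1)$. Fix a strictly $\cF$-consistent scoring function $S$ for $T$, and abbreviate $t_0 := T(F_0)$, $t_1 := T(F_1)$. The key idea is to follow the value of $T$ as $\lambda$ moves along the interval $[0,1]$ and to exploit the fact that, on any sub-segment where $T$ is constant, the segment's distributions form a convex set on which $T$ has a constant value, so the standard first-order/consistency machinery pins things down.

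The key steps, in order. First, consider the function $\varphi(\lambda) := \bar S(t_0, F_\lambda) - \bar S(t_1, F_\lambda)$ for $\lambda \in [0,1]$, where $F_\lambda = \lambda F_1 + (1-\lambda)F_0$. Since $\bar S(x, F_\lambda) = \lambda \bar S(x, F_1) + (1-\lambda)\bar S(x,F_0)$ by linearity of the integral in $F$, the map $\varphi$ is \emph{affine} in $\lambda$. Second, evaluate $\varphi$ at the endpoints: at $\lambda = 0$ strict consistency for $T$ at $F_0$ (with $T(F_0)=t_0 \neq t_1$) gives $\bar S(t_1, F_0) > \bar S(t_0, F_0)$, hence $\varphi(0) < 0$; symmetrically $\varphi(1) > 0$. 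By the intermediate value theorem there is a unique $\lambda^\ast \in (0,1)$ with $\varphi(\lambda^\ast) = 0$, i.e.\ $\bar S(t_0, F_{\lambda^\ast}) = \bar S(t_1, F_{\lambda^\ast})$. Third, use the hypothesis: $T(F_{\lambda^\ast}) \in \{t_0, t_1\}$. Whichever of the two it equals, strict consistency forces the other action to give a \emph{strictly} larger expected score at $F_{\lambda^\ast}$ — contradicting $\varphi(\lambda^\ast) = 0$, which says the two expected scores coincide. This contradiction shows $T$ cannot be elicitable.

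I expect the main (and essentially only) obstacle to be a minor bookkeeping point rather than a deep one: one must make sure that all the distributions $F_\lambda$ actually lie in $\cF$ so that $\bar S(\cdot, F_\lambda)$ and the consistency inequalities are available — but this is immediate once $\cF$ is assumed convex (which is in force throughout Section~\ref{sec:main}, and in any case can be arranged by passing to the convex hull of $\{F_0, F_1\}$, on which $T$ still satisfies the stated dichotomy). A second small point is that the argument only needs $S$ to be $\cF$-integrable so that $\bar S(t_0, \cdot)$ and $\bar S(t_1, \cdot)$ are well-defined affine functions of $\lambda$; this holds by the standing assumption that scoring functions are $\cF$-integrable. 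No regularity (continuity, differentiability) on $S$ or $T$ is needed, since the affineness of $\varphi$ does all the work, and the dichotomy hypothesis replaces any appeal to convexity of level sets. In particular this recovers, and is strictly stronger than, the failure of elicitability one would get from Theorem~\ref{th:convexlevel}, since here the level sets \emph{are} convex.
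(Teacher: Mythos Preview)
Your proof is correct and follows essentially the same route as the paper's own argument: define the affine function $\lambda \mapsto \bar S(t_0,F_\lambda) - \bar S(t_1,F_\lambda)$, use strict consistency at the endpoints to get opposite signs, find a zero in $(0,1)$, and derive a contradiction from the dichotomy hypothesis at that point. The only cosmetic difference is that you name the function $\varphi$ and explicitly invoke the intermediate value theorem (noting affineness, hence uniqueness of $\lambda^\ast$, which is not actually needed), whereas the paper writes out the linear decomposition directly; your remarks about convexity of $\cF$ are also already handled by the standing assumption at the start of Section~\ref{sec:main}.
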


\begin{proof}
Set $x_0 := T(F_0)$, $x_1 := T(F_1)$ and $F_\lambda := \lambda F_1 + (1-\lambda) F_0 $ and let $x_0 \neq x_1$. Assume that $S$ is a strictly consistent scoring function for $T$. Then we have
\begin{align*}
\bar{S}(x_0, F_\lambda) - \bar{S}(x_1, F_\lambda) &= \lambda (\bar{S}(x_0, F_1) - \bar{S}(x_1, F_1)) \\ 
&\phantom{=} \,+ (1- \lambda) ( \bar{S}(x_0, F_0) - \bar{S}(x_1, F_0) )
\end{align*}
and the first difference $\bar{S}(x_0, F_1) - \bar{S}(x_1, F_1)$ is positive, while the second difference $ \bar{S}(x_0, F_0) - \bar{S}(x_1, F_0)$ is negative. Consequently, $\bar{S}(x_0, F_\lambda) = \bar{S}(x_1, F_\lambda)$ for some $\lambda \in (0,1)$. Since either $T(F_\lambda) = x_0$ or $T(F_\lambda) = x_1$ holds by assumption, we arrive at a contradiction. 
\end{proof}

\begin{cor}  \label{cor:maxnotelic}
If $T: \cF \rightarrow \real$ is a non-constant max-functional, then it is not elicitable.
\end{cor}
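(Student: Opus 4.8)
The plan is to derive Corollary~\ref{cor:maxnotelic} as an immediate application of Theorem~\ref{th:Criterion2}. Since $T$ is a non-constant max-functional, there exist $F_0, F_1 \in \cF$ with $T(F_0) \neq T(F_1)$; this is precisely the non-triviality we need. For any $\lambda \in (0,1)$, the defining property of a max-functional gives
\begin{align*}
T(\lambda F_1 + (1-\lambda) F_0) = \max(T(F_0), T(F_1)) \in \{T(F_0), T(F_1)\},
\end{align*}
so the hypotheses of Theorem~\ref{th:Criterion2} are satisfied verbatim (with $\sA = \real$). Hence $T$ is not elicitable.

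The only point requiring a sentence of care is the passage from ``$T$ non-constant'' to ``$\exists\, F_0, F_1$ with distinct values'' --- but these are by definition the same statement, so nothing is left to prove. It is worth noting that the convexity of $\cF$, assumed throughout Section~\ref{sec:main}, guarantees $\lambda F_1 + (1-\lambda) F_0 \in \cF$, so that $T$ is defined at the mixture and the max-functional identity applies; this is what makes the verification of Theorem~\ref{th:Criterion2}'s hypotheses legitimate rather than vacuous.

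There is essentially no obstacle here: the corollary is designed to be a one-line consequence of the theorem, and the real content --- the new non-elicitability criterion --- has already been established in Theorem~\ref{th:Criterion2}. The proof will therefore be a two- or three-line deduction, with the only subtlety being the (trivial) observation that a max-functional never lands strictly between its two input values, so the inclusion $T(F_\lambda) \in \{T(F_0), T(F_1)\}$ holds for \emph{every} $\lambda$, exactly as the criterion demands.
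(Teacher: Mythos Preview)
Your proposal is correct and matches the paper's approach exactly: the corollary is stated in the paper without a separate proof, being an immediate consequence of Theorem~\ref{th:Criterion2} via the observation that a non-constant max-functional automatically satisfies its hypotheses. Your added remarks on convexity of $\cF$ and the trivial unpacking of ``non-constant'' are appropriate clarifications but do not deviate from the intended one-line deduction.
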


Loosely speaking, Theorem~\ref{th:Criterion2} states that elicitable functionals cannot be piecewise constant on convex combinations of distributions. It is closely connected to Theorem~\ref{th:convexlevel}, but of independent interest beyond its use to establish non-elicitability for max-functionals.
\citet{Fissetal2019} use similar arguments as in the proof of Theorem~\ref{th:Criterion2} to study necessary conditions for the level sets of $T$ in the context of set-valued functionals $T : \cF \to 2^\sA$, where $2^\sA$ denotes the power set of~$\sA$. Apart from that
\citet{FronKash2018} state that `\textit{no nonconstant finite-valued property is identifiable}'. Theorem~\ref{th:Criterion2} implies the following analogon.

\begin{cor}
If $T: \cF \rightarrow \real$ is a non-constant finite-valued functional, then it is not elicitable.
\end{cor}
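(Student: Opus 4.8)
The plan is to reduce the finite-valued case to Theorem~\ref{th:Criterion2} by exhibiting two distributions $F_0, F_1 \in \cF$ with $T(F_0) \neq T(F_1)$ such that the function $\lambda \mapsto T(\lambda F_1 + (1-\lambda) F_0)$ takes only the two values $T(F_0)$ and $T(F_1)$ on the open interval $(0,1)$. Since $T$ is non-constant, pick any $F_0, F_1 \in \cF$ with $a := T(F_0) \neq T(F_1) =: b$; by convexity of $\cF$ the whole segment $F_\lambda := \lambda F_1 + (1-\lambda) F_0$ lies in $\cF$, and since $T$ is finite-valued the image $\{T(F_\lambda) : \lambda \in [0,1]\}$ is a finite subset of $\real$. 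If this image is exactly $\{a,b\}$ we are already done. Otherwise we will shrink the segment to a subsegment on which only two values occur.

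The key step is the following elementary observation about finite-valued functions on an interval. Let $\phi(\lambda) := T(F_\lambda)$ for $\lambda \in [0,1]$; this is a function on $[0,1]$ taking finitely many values, with $\phi(0) = a$ and $\phi(1) = b$. I would argue that one can always find a subinterval $[\mu_0, \mu_1] \subseteq [0,1]$ with $\mu_0 < \mu_1$ such that $\phi$ restricted to $(\mu_0,\mu_1)$ takes only the two values $\phi(\mu_0)$ and $\phi(\mu_1)$, and these two values are distinct. One way: among all values $v$ in the (finite) image of $\phi$, the preimage $\phi^{-1}(v)$ partitions $[0,1]$ into finitely many ``pieces''; choose a value $v_1$ whose preimage contains a point and whose preimage, restricted suitably, is adjacent to the preimage of another value $v_0 \neq v_1$. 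Concretely, set $v_0 := \phi(0) = a$, let $\mu_1 := \sup\{\lambda \in [0,1] : \phi(\lambda') = a \text{ for all } \lambda' \in [0,\lambda]\}$ if $\phi$ is constant $= a$ near $0$, and otherwise use that $\phi$ must jump; iterating this finitely often (the image is finite) isolates a subinterval on which $\phi$ attains at most two values but not one. Then define $G_0 := F_{\mu_0}$, $G_1 := F_{\mu_1}$, both in $\cF$; a convex combination $\nu G_1 + (1-\nu) G_0$ equals $F_\lambda$ for $\lambda = \mu_0 + \nu(\mu_1 - \mu_0) \in (\mu_0,\mu_1)$, so $T(\nu G_1 + (1-\nu) G_0) \in \{T(G_0), T(G_1)\}$ for all $\nu \in (0,1)$, with $T(G_0) \neq T(G_1)$. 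Theorem~\ref{th:Criterion2} then yields that $T$ is not elicitable.

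The main obstacle is making the combinatorial ``isolate a two-valued subinterval'' step fully rigorous without extra hypotheses on $\phi$ (it need not be monotone, continuous, or even measurable-in-a-nice-way as a function of $\lambda$ a priori — although $\phi$ takes finitely many values so its level sets are just arbitrary subsets of $[0,1]$). The cleanest route is probably: take a value $c$ in the image of $\phi$ with $c \neq a$ such that $\phi^{-1}(c)$ has a point $\lambda^\ast$ that is a limit of points in $\phi^{-1}(\{a\})$ or, failing that, exploit that some two level sets must ``touch''; then a small interval around such a touching point works after possibly relabelling $a$ by an intermediate value. An alternative, perhaps slicker, is to invoke Theorem~\ref{th:convexlevel} directly: if $T$ were elicitable it would have convex level sets, so each $\phi^{-1}(v)$ is an interval; but finitely many disjoint intervals covering $[0,1]$ with $\phi(0) \neq \phi(1)$ force at least two of them to be adjacent, and picking endpoints $\mu_0 < \mu_1$ straddling such an adjacency gives a segment on which $\phi$ takes exactly the two values at the endpoints — contradicting Theorem~\ref{th:Criterion2}. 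This second approach sidesteps the combinatorial nuisance entirely and is the version I would write up.
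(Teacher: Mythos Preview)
Your second approach is correct and matches what the paper has in mind: the corollary is stated without proof as following from Theorem~\ref{th:Criterion2}, and the natural way to verify the hypotheses of that theorem for an arbitrary finite-valued functional is exactly to combine it with Theorem~\ref{th:convexlevel}, as you do. Once the level sets of $\phi(\lambda)=T(F_\lambda)$ are intervals, finitely many of them partition $[0,1]$, two adjacent ones share an endpoint, and choosing $\mu_0,\mu_1$ on either side yields the pair required by Theorem~\ref{th:Criterion2}.

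Your instinct about the first approach is right, and it is worth stating that the obstacle there is not merely a nuisance but a genuine gap. Without assuming elicitability (and hence convex level sets), $\phi$ can be an \emph{arbitrary} map from $[0,1]$ into a finite set: for instance, partition $[0,1]$ into three pairwise dense subsets and let $\phi$ take a different value on each. Then every nondegenerate subinterval meets all three level sets, so no choice of $\mu_0<\mu_1$ produces a two-valued segment. In other words, the pair $(G_0,G_1)$ required by Theorem~\ref{th:Criterion2} need not exist for a general finite-valued $T$; its existence is guaranteed precisely \emph{because} the level sets are intervals, which you only know after assuming elicitability and invoking Theorem~\ref{th:convexlevel}. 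So the argument is genuinely by contradiction, and Theorem~\ref{th:convexlevel} is not optional --- write up the second version.
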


\paragraph{Elicitation complexity of max-functionals}

Turning from the elicitability question to the elicitation complexity of max-functionals, the question of elicitation complexity is only meaningful in relation to a family of sets $(\cC_k)_{k \in \natural}$, where each set $\cC_k \subset \cE_k (\cF)$ is a collection of reasonably regular $\real^k$-valued elicitable functionals, cf.\ Section~\ref{sec:3concepts}. Our major regularity requirement is \textit{mixture-continuity} as in \citet{BellBign2015} and \citet{FissZieg2019}.

\begin{definition}  \label{def:mixconti}
A functional $T: \cF \rightarrow \sA$ is called \textit{mixture-continuous} if for all $F_0, F_1 \in \cF$ such that $\lambda F_1 + (1-\lambda) F_0  \in \cF$ for all $\lambda \in [0,1]$, the mapping
\begin{align*}
[0,1] \rightarrow \sA, \quad \lambda \mapsto T( \lambda F_1 + (1-\lambda) F_0 )
\end{align*}
is a continuous function.
\end{definition}

Many statistical properties are mixture-continuous, e.g.\ ratios of expectations, quantiles and expectiles, see \citet{FissZieg2019} for details. \citet{Lambetal2008} consider only continuous functionals and \citet{FissZieg2019} and \citet{BellBign2015} show that under weak assumptions, an elicitable functional $T'$ is mixture-continuous if its expected score function $x \mapsto \bar S (x,F)$ is continuous for all $F \in \cF$. Therefore, a functional which is not mixture-continuous can have discontinuous expected scores, leading to difficulties in forecast evaluation, estimation and regression.

To avoid further degenerate behaviour, we impose a richness assumption on potential intermediate functionals $T'$ in the sense that we require the image $T'(\cF) \subseteq \real^k$ to have at least non-empty interior. This assumption is natural for large enough classes $\cF$ and was, for instance, used by \citet{FissZieg2016,FissZieg2019} when establishing results on consistent scoring functions for $T'$.

In addition to mixture continuity, we follow \citet{Lambetal2008} and consider only functionals with elicitable components. Summarising, the first family of functionals which we consider in our complexity result is
\begin{align*}
\cU_k := \left\lbrace T' \in \cE_k(\cF) \, \left|
\begin{array}{lc}
	T' \text{ mixture-continuous with elicitable} \\
	\text{components, } \intr (T' (\cF)) \neq \emptyset
\end{array} \right\rbrace \right. ,
\end{align*}
where $\intr (B)$ denotes the interior of a set $B \subseteq \real^k$. Alternatively, we require that the image $T'(\cF)$ of a potential intermediate functional $T'$ has not only non-empty interior, but is itself an open set, i.e.\ we consider the family
\begin{align*}
\cV_k := \left\lbrace T' \in \cE_k(\cF) \, \left|
\begin{array}{lc}
	T' \text{ mixture-continuous with elicitable} \\
	\text{components, } T' (\cF) \text{ open} 
\end{array} \right\rbrace \right.  .
\end{align*}
We are now in position to consider the elicitation complexity of max-functionals with respect to these families.

\begin{theorem} \label{th:complexity}
Let $T:\cF \rightarrow \real$ be a max-functional. Then the following hold true.
  \begin{enumerate}[label=(\alph*)]
  \item $T$ has elicitation complexity $\infty$ with respect to $(\cU_k)_{k \in \natural}$ unless $T(\cF)$ contains its supremum.
  \item $T$ has elicitation complexity $\infty$ with respect to $(\cV_k)_{k \in \natural}$ unless $T$ is constant.
  \end{enumerate}
\end{theorem}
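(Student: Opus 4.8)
The plan is to argue by contradiction, supposing $\mathsf{elic}(T) = k < \infty$, so that $T = f \circ T'$ for some $k$-dimensional intermediate functional $T'$ in the relevant class ($\cU_k$ or $\cV_k$) and some link function $f : T'(\cF) \to \real$. The strategy is to probe $T'$ and $f$ along a segment of mixtures $F_\lambda = \lambda F_1 + (1-\lambda)F_0$ on which $T$ is genuinely ``max-like''. Concretely, pick $F_0, F_1 \in \cF$ with $T(F_0) \ne T(F_1)$; WLOG $t_0 := T(F_0) < t_1 := T(F_1)$, so $T(F_\lambda) = t_1$ for every $\lambda \in (0,1)$ while $T(F_0) = t_0$. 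Because $T'$ is mixture-continuous, $\lambda \mapsto T'(F_\lambda)$ is a continuous curve $\gamma : [0,1] \to T'(\cF) \subseteq \real^k$ with $f(\gamma(\lambda)) = t_1$ for all $\lambda \in (0,1)$ but $f(\gamma(0)) = t_0 \ne t_1$; in particular $\gamma$ is not constant near $0$, and $\gamma(0)$ is an accumulation point of the level set $\{z : f(z) = t_1\}$ without lying in it.

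The crucial extra input is that the \emph{components} of $T'$ are themselves elicitable, hence each has convex level sets by Theorem~\ref{th:convexlevel}. I would exploit this together with the max-structure of $T$ to pin down how $T'$ behaves on mixtures. The key observation: if $T'_i(F_0) = T'_i(F_1)$ for some component $i$, then by convexity of level sets $T'_i$ is constant along the whole segment $F_\lambda$; and if $T'_i(F_0) \ne T'_i(F_1)$, then (since $T'_i$ is real-valued, mixture-continuous, and a max/min-functional arguments do \emph{not} apply to it — it is merely elicitable) $T'_i$ sweeps out a whole nondegenerate interval along the segment. The goal is to show that, no matter what, there is a point $\gamma(\lambda_*)$ in the interior of $T'(\cF)$ arbitrarily close to $\gamma(0)$, yet $f$ jumps from $t_0$ to $t_1$ across it — and then derive a contradiction by constructing, from the openness/interior hypothesis, a \emph{new} pair of distributions whose $T'$-values straddle this jump in such a way that $T$ again behaves piecewise-constantly, letting us invoke Theorem~\ref{th:Criterion2} one dimension lower and induct on $k$. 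This is where parts (a) and (b) diverge: for (b), $T'(\cF)$ open lets me freely move inside a neighbourhood of $\gamma(\lambda)$ and realize the straddling distributions, forcing the contradiction unless the jump never occurs, i.e.\ $T$ is constant; for (a), having only nonempty interior is weaker, so I instead push $\gamma$ toward the boundary and the argument only breaks down when $t_1$ is the supremum of $T(\cF)$ and is attained — the ``$T(\cF)$ contains its supremum'' escape hatch.

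I would organize the write-up as an induction on $k = \mathsf{elic}(T)$. The base case $k=1$: here $T = f \circ T'$ with $T'$ a one-dimensional elicitable, mixture-continuous functional whose image is an interval (nonempty interior in $\real$) or an open interval. Along $F_\lambda$ the curve $\gamma$ is a continuous real function, constant on $(0,1)$ at value $z_1$ with $\gamma(0) = z_0$, and $f(z_1) = t_1 \ne t_0 = f(z_0)$. Mixture-continuity forces $\gamma$ to take all values between $z_0$ and $z_1$, and one checks that on a suitable sub-family $f \circ T'$ is then forced to be piecewise constant in the sense of Theorem~\ref{th:Criterion2} applied to $T'$ itself — but $T'$ \emph{is} elicitable, contradiction — unless $z_1$ is an endpoint of $T'(\cF)$ not lying in its interior (case (a), with $t_1 = \sup T(\cF)$ attained) or $T'(\cF)$ is not open there (case (b), degenerating to $T$ constant). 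For the inductive step I reduce dimension by restricting to a sub-family on which one component of $T'$ is held constant (possible by the convex-level-set dichotomy above), so that the remaining $(k-1)$ components still elicit $T$, and apply the inductive hypothesis after checking the $\cU$- or $\cV$-membership is inherited (Lemma~\ref{lem:SmallerSets} handles the complexity bookkeeping, and mixture-continuity plus the interior/open conditions restrict appropriately to a sub-mixture-family).

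The main obstacle I anticipate is precisely the reduction-of-dimension step: ensuring that after fixing a component of $T'$ the restricted image still has nonempty interior (resp.\ is open) in $\real^{k-1}$, so that the inductive hypothesis genuinely applies. A fixed level set of a single elicitable component need not cut out a full-dimensional slice of $T'(\cF)$, so I expect to need a Baire-category or measure-theoretic argument — or a clever choice of which component to freeze and which mixture segment to work on — to guarantee that \emph{some} slice through a neighbourhood of $\gamma(\lambda)$ retains the required topological richness. The second delicate point is cleanly separating the two escape clauses: showing that the only way the contradiction fails under the $\cU_k$-hypotheses is that $t_1 = \sup T(\cF) \in T(\cF)$, versus showing that under the stronger $\cV_k$-hypotheses the contradiction fails only in the fully trivial case $T \equiv \mathrm{const}$. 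I would handle this by tracking, throughout the induction, where the ``jump value'' $t_1$ sits relative to the range $T(\cF)$ and to the image of the intermediate functional.
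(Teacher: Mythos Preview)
Your proposal has a genuine gap, and the overall strategy diverges from the paper's in a way that creates problems rather than solving them.

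The concrete error is in your base case $k=1$. You write that ``along $F_\lambda$ the curve $\gamma$ is a continuous real function, constant on $(0,1)$ at value $z_1$ with $\gamma(0)=z_0$''. This is false: what is constant on $(0,1)$ is $f\circ\gamma$, not $\gamma=T'(F_{(\cdot)})$ itself. The intermediate functional $T'$ is merely elicitable and mixture-continuous (think of the mean), so $T'(F_\lambda)$ can move freely; only its image under $f$ is pinned at $t_1$. You even state correctly two lines earlier that ``$\gamma$ is not constant near $0$'', which contradicts the base-case claim. Without $\gamma$ being constant, the ``jump in $f$ at a single point $z_1$'' picture collapses, and the contradiction you want to extract from Theorem~\ref{th:Criterion2} does not materialise.

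More broadly, the strategic direction is reversed from what works. You start from a pair $F_0,F_1$ with $T(F_0)\neq T(F_1)$ and then try to control where the curve $T'(F_\lambda)$ lands in $T'(\cF)$. But you have no leverage over this curve's location relative to $\intr(T'(\cF))$, and your induction hinges on restricting to a level set of one component while keeping the interior/open hypothesis---a step you yourself flag as the ``main obstacle'' and do not resolve. The paper avoids all of this by going the other way: it first fixes a hyperrectangle $Q\subseteq\intr(T'(\cF))$, then uses surjectivity to \emph{choose} distributions $F_{c_j,z},F_{d_j,z}$ realising prescribed corner points of $Q$. Elicitability of the other components (convex level sets) freezes them along the mixture, mixture-continuity of the $j$-th component fills the segment, and the max-property of $T$ forces $f$ constant on axis-parallel segments, hence on all of $\intr(Q)$. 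A second mixture argument then propagates this constant as an upper bound for $f$ on all of $\sA'$ (giving (a)), and if $\sA'$ is open the local argument can be rerun at every point (giving (b)). No induction on $k$ is needed, and the interior/open hypothesis is used exactly once, at the point where it is available by assumption.
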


\begin{proof}
Assume there is a $k \in \natural$, a surjective functional $T' : \cF \rightarrow \sA'$ in $\cU_k$ or $\cV_k$ and a function $f: \sA' \rightarrow \real$ such that $T = f \circ T'$. Without loss of generality, $T'$ is surjective, hence its mixture-continuity together with the assumed convexity of $\cF$ imply that $\sA'$ is path-connected. Since it has non-empty interior, we can choose a hyperrectangle $Q := \prod_{i=1}^{k} [ c_i, d_i ]  \subseteq \intr (\sA')$ and consider each component of $T'$ isolated on $Q$. To do so, choose a component $j \in \lbrace 1, \ldots, k \rbrace$ and a $z_i \in [c_i, d_i]$ for all $i \in \lbrace 1, \ldots, k \rbrace \backslash \lbrace j \rbrace$. We can then obtain $F_{c_j,z}, F_{d_j, z} \in \cF$ such that
\begin{align*}
T'(F_{c_j,z}) &= (z_1, \ldots, z_{j-1}, c_j, z_{j+1}, \ldots, z_k) \quad \text{ and}\\
T'(F_{d_j,z}) &= (z_1, \ldots, z_{j-1}, d_j, z_{j+1}, \ldots, z_k) .
\end{align*}
All components of $T'$ are elicitable and thus have convex level sets by Theorem~\ref{th:convexlevel}. Consequently, the $i$-th component, where $i \in \lbrace 1, \ldots, k \rbrace \backslash \lbrace j \rbrace$, equals $z_i$ for all convex combinations of $F_{c_j,z}$ and $F_{d_j,z}$. If we define
\begin{align*}
\sA_{j,z}' :=  \lbrace (z_1, \ldots, z_{j-1}, x, z_{j+1}, \ldots, z_k)  \mid x \in (c_j, d_j) \rbrace  \subseteq Q ,
\end{align*}
the fact that the $j$-th component has convex level sets and is mixture-continuous implies that for all $a \in \sA_{j,z}'$ there exists a $\lambda \in (0,1)$ with  $T' ( \lambda F_{c_j,z} + (1-\lambda)F_{d_j,z} ) = a$. The connection $T = f \circ T'$ now gives
\begin{align*}
f( (z_1, \ldots, z_{j-1}, x, z_{j+1}, \ldots, z_k) ) &= f( T' ( \lambda F_{c_j,z} + (1-\lambda) F_{d_j,z} ) \\
&= T( \lambda F_{c_j,z} + (1-\lambda) F_{d_j,z}) \\
&= \max ( T(F_{c_j,z}) , T(F_{d_j,z}) ),
\end{align*}
for all $x \in (c_j, d_j)$, implying that $f$ has to be constant on the set $\sA_{j,z}'$. Repeating this argument for any choice of $j \in \lbrace 1, \ldots, k \rbrace$ and $z_i \in [c_i, d_i]$ with $i \in \lbrace 1, \ldots, k \rbrace \backslash \lbrace j \rbrace$ shows that there is a $C \in \real$ such that $f(q) = C $ for all $q \in \intr ( Q)$.

Now fix $x_0 \in \intr (Q)$. For any $x_1 \in \sA'$ we can choose distributions $F_0, F_1 \in \cF$ with $T' (F_0)= x_0$ and $T' (F_1 ) = x_1$. Since $x_0 \in \intr (Q)$  and $T'$ is mixture-continuous, there is a small $\mu \in (0,1)$ such that $T'(\mu  F_1 + (1-\mu)F_0 ) \in \intr (Q)$ holds. We thus obtain
\begin{align*}
C  = f(T'( \mu F_1 + (1- \mu) F_0 ) ) &= T( \mu F_1 + (1- \mu) F_0 ) \\
&= \max (T(F_0) , T(F_1) ) \\
&= \max ( f(x_0), f(x_1) ) = \max (C , f(x_1) ),
\end{align*}
implying $f(x_1) \leq C$. Since $x_1$ was arbitrary, we have $f(x) \leq C$ for all $x \in \sA'$, showing $C = \sup T(\cF)$ and proving statement~(a).

Assume now that $\sA'$ is open. Then for every $x_1 \in \sA'$ there is a hyperrectangle $Q_1 \subseteq \sA'$ such that $x_1 \in \intr (Q_1)$. Arguing as in the beginning of the proof gives $f(q) = f(x_1)$ for all $q \in \intr (Q_1)$. So letting $T'(F_1) = x_1$ as above we obtain a $\nu \in (0,1)$ such that $T'(\nu F_1 + (1-\nu) F_0) \in \intr (Q_1)$. This implies 
\begin{align*}
C = f(T'(\mu F_1 + (1-\mu) F_0) ) &= \max ( T(F_0), T(F_1)) \\
&= f(T' (\nu F_1 + (1-\nu) F_0) ) = f(x_1) .
\end{align*}
Since $x_1$ was arbitrary, $T$ must be constant, proving part~(b).
\end{proof}

Theorem~\ref{th:complexity} implies infinite elicitation complexity of max-functionals in a wide range of natural settings. Ultimately, our main interest lies in understanding the elicitation complexity with respect to the more general family $\cU_k$, which imposes only very weak assumptions on a potential intermediate functionals.

\begin{cor} \label{cor:complexity}  
Let $T:\cF \to \real$ be a max-functional and let one of the following conditions be satisfied.
\begin{enumerate}[label=(\roman*)]
	\item $T$ is unbounded.
	\item $T$ is surjective onto an open interval $(a,b)$.
	\item $T$ is surjective onto a half-open interval $[a,b)$.
\end{enumerate}
Then $T$ has elicitation complexity $\infty$ with respect to $(\cU_k)_{k \in \natural}$.
\end{cor}

Alternatively, considering elicitation complexity with respect to the family $(\cV_k)_{k \in \natural}$ amounts to requiring more regularity for a potential intermediate functional $T'$ and, in this case, \emph{all} non-constant max-functionals have infinite elicitation complexity. Lemma~\ref{lem:SmallerSets} further implies that the infinite elicitation complexity of max-functionals also extends to larger classes than the considered convex family of distribution functions $\cF$ and is valid with respect to smaller families contained in $(\cU_k)_{k \in \natural}$ or $(\cV_k)_{k \in \natural}$.

Finally, by definition, any functional of finite elicitation complexity is conditionally elicitable, but it is unclear whether the reverse implication holds. We thus conclude with showing that max-functionals with infinite elicitation complexity can neither be conditionally elicitable nor jointly elicitable.

\begin{theorem} 
Let $T : \cF \to \real$ be a max-functional such that $\mathsf{elic}(T) = \infty$ with respect to a family $(\cC_k)_{k \in \natural}$. Let $T' : \cF \to \sA'$ be a functional with $T' \in \cC_m$ for some $m \in \natural$. Then the following hold true.
\begin{enumerate}[label=(\alph*)]
	\item $T$ is not conditionally elicitable given $T'$.
	\item $T$ is not jointly elicitable with $T'$.
\end{enumerate}
\end{theorem}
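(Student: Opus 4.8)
The plan is to prove both parts by contradiction, in each case constructing from the hypothesised second functional $T'$ a representation $T = f \circ T''$ with $T''$ belonging to some $\cC_j$, contradicting $\mathsf{elic}(T) = \infty$. The key observation is that conditional and joint elicitability both force a ``small'' intermediate object accompanied by $T'$, so I can absorb everything into a finite-dimensional elicitable functional.

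For part~(b): suppose $(T, T')$ is elicitable, where $T' \in \cC_m$ takes values in $\sA' \subseteq \real^m$. Then $(T, T')$ is a $\real^{m+1}$-valued elicitable functional, and the projection $\pi$ onto the last $m$ coordinates links it back to $T'$, while the projection onto the first coordinate recovers $T$. So $T = f \circ (T, T')$ with $f$ the first-coordinate projection and $(T,T')$ elicitable. The issue is that $(T,T')$ need not lie in $\cC_{m+1}$ — membership in $\cC_k$ typically requires mixture-continuity and elicitable components, and while $T'$ has elicitable components, the component $T$ is a non-constant max-functional and by Corollary~\ref{cor:maxnotelic} is \emph{not} elicitable, so $(T,T')$ has a non-elicitable component and fails the hypotheses of $\cC_{m+1}$ (whether defined via $\cU$ or $\cV$). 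Hence this direct route does not immediately give a contradiction with $\mathsf{elic}(T) = \infty$ \emph{with respect to} $(\cC_k)$. I expect the intended argument is different: if $T$ were jointly elicitable with $T' \in \cC_m$, then $T$ would be conditionally elicitable given $T'$ (this implication is stated in Section~\ref{sec:3concepts}), so part~(b) reduces to part~(a), and the real work is entirely in part~(a).

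For part~(a): assume $T$ is conditionally elicitable given $T'$, so for every $x \in \sA' := T'(\cF)$ the restriction $T_{|\cF_x}$ to $\cF_x = \{F \in \cF : T'(F) = x\}$ is elicitable. Now $T$ is a max-functional, and by the restriction lemma, $T_{|\cF_x}$ is again a max-functional on the convex set $\cF_x$ (convexity of $\cF_x$ follows from convexity of level sets of $T'$, since $T'$ has elicitable components, via Theorem~\ref{th:convexlevel}). By Corollary~\ref{cor:maxnotelic}, an elicitable max-functional must be constant, so $T$ is constant on each fibre $\cF_x$; that is, $T = f \circ T'$ for a well-defined function $f : \sA' \to \real$. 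But $T' \in \cC_m$, so this exhibits $T$ as $f \circ T'$ with an intermediate functional in $\cC_m$, forcing $\mathsf{elic}(T) \leq m < \infty$ — contradicting the hypothesis. This contradiction proves~(a), and then~(b) follows since joint elicitability with an elicitable $T'$ implies conditional elicitability given $T'$.

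The main obstacle is the bookkeeping around $\cC_k$-membership in part~(b): one must be careful that ``jointly elicitable with $T'$'' is the right notion (projection onto the last components as link function) and confirm that the reduction ``jointly elicitable $\Rightarrow$ conditionally elicitable'' applies here — it does, because $T' \in \cC_m \subseteq \cE_m(\cF)$ is elicitable, which is exactly the hypothesis needed for that implication as recorded in Section~\ref{sec:3concepts}. A secondary point worth stating explicitly is why $\cF_x$ is convex: this uses that the \emph{components} of $T'$ are elicitable (part of the definition of both $\cU_m$ and $\cV_m$), hence each has convex level sets, hence so does $T'$ itself, so $\cF_x$ is an intersection of convex sets and therefore convex, which is what makes $T_{|\cF_x}$ a max-functional to which Corollary~\ref{cor:maxnotelic} applies.
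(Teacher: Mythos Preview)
Your proof is correct and follows essentially the same approach as the paper: for part~(a) you argue that conditional elicitability forces $T$ to be constant on each fibre $\cF_x$ (via Corollary~\ref{cor:maxnotelic}), yielding a link function $T = f \circ T'$ and hence finite complexity, while the paper runs the contrapositive (no link function exists, so some fibre has non-constant $T$, contradicting elicitability there via Theorem~\ref{th:Criterion2}); part~(b) is reduced to~(a) in both cases. One small simplification: convexity of $\cF_x$ follows directly from the elicitability of $T'$ itself (since $\cC_m \subseteq \cE_m(\cF)$) via Theorem~\ref{th:convexlevel}, so you need not invoke componentwise elicitability, which is specific to the families $\cU_k$, $\cV_k$ rather than the general $(\cC_k)$ in the statement.
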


\begin{proof}
For the first part assume conversely, that there is an $m \in \natural$ and a functional $T' \in \cC_m$ such that $T$ is conditionally elicitable given $T'$. That is, $T$ is elicitable on the subclass $\cF_x = \{ F \in \cF \mid T'(F) = x \} $ for any $x \in \sA'$. By assumption, there is no link function $f$ such that $T = f \circ T'$ holds. Consequently, there is at least one $z \in \sA' \subseteq \real^m$ such that $T$ is not constant on $\cF_{z}$. If $z$ defines such a class, then it is convex due to the elicitability of $T'$ and moreover we can find  $F_0, F_1 \in \cF_{z}$ such that $T(F_0) \neq T(F_1)$ holds. Theorem~\ref{th:Criterion2} now implies that the restriction of $T$ to $\cF_{z}$ cannot be elicitable, a contradiction to the conditional elicitability of $T$.

For the second part note that, as remarked in Section~\ref{sec:3concepts} and in the discussion of \citet{FissZieg2016}, the joint elicitability of $T$ with an elicitable functional $T'$ implies that $T$ is conditionally elicitable given $T'$. Consequently, the first part of the proof implies the result.
\end{proof}

We conclude this section with a technical remark. In the spirit of \citet{FronKash2018}, our complexity result (Theorem~\ref{th:complexity}) employs regularity assumptions on the possible intermediate functionals. The main assumption is that they possess elicitable components. Why this is essential is illustrated by the use of the hyperrectangle $Q$ in the proof. Intuitively, this assumption can be relaxed at the cost of more technical arguments. The main challenge hereby is to control the values of $T'$ in a small hyperrectangle (or ball) around some $x_0 \in \intr(\sA')$. However, we did not pursue this approach further, since we believe that our setting covers many functionals of practical interest and at the same time illustrates the irregular behaviour that will be inherent to any link function for a max-functional.

\section{Examples of max-functionals}   \label{sec:examples}

Prominent examples of max-functionals, to which the results of Section~\ref{sec:main} apply, are routinely considered in extreme value theory and are key characteristics for the purpose of inference on the tail of a distribution.

\paragraph{Upper endpoint}

For a real-valued random variable with distribution function $F$, its upper endpoint is the supremum of its support
\begin{align*}
x^F := \sup \lbrace x \in \real \mid F(x) < 1 \rbrace .
\end{align*}
By definition, the upper endpoint can be interpreted as a real-valued max-functional on the convex class $\lbrace F \in \cF \mid x^F < \infty \rbrace$. \citet[Example 3.9]{BellBign2015} discuss the upper endpoint under the name \textit{worst-case risk measure} and show that it is not elicitable, once further regularity conditions on the admissible scoring functions are imposed. In light of Corollary~\ref{cor:maxnotelic} the non-elicitability of the upper endpoint follows without any further assumptions. In addition it has infinite elicitation complexity in the sense of Theorem~\ref{th:complexity} and Corollary~\ref{cor:complexity}.

\paragraph{Index of regular variation / Tail index}
When the upper endpoint is infinite, another key characteristic to describe the tail behaviour of heavy-tailed distributions is the index of regular variation. A strictly positive measurable function $f$ satisfying
\begin{align*}
\lim_{x \rightarrow \infty} \frac{f(xt)}{f(x)} = t^\rho
\end{align*}
for $t > 0$ is called \textit{regularly varying (at infinity) with index} $\rho (f) \in \real$. For a distribution $F$ its index of regular variation is the respective index for its survival function $\overline{F} := 1- F$, that is, $T(F) := \rho ( \overline{F})$. Its inverse $T(F)^{-1}$ is also called \textit{tail index} in the risk management literature, cf.\ \citet[Section 5.1]{McNeiletal2015}.
If the tail $\overline{F}$ is regularly varying with (a negative) index $\rho$, this means that $\overline{F}$ decays essentially like a power function with decay rate $1/\rho$. Since $\rho (f + g) = \max ( \rho (f), \rho (g) )$ (cf.\ e.g.\ \citet[Proposition~B.1.9]{FerrHaan2006}), the index of regular variation $T$ is naturally a max-functional, while the tail index $T^{-1}$ is a min-functional.

\paragraph{Tail-separating functionals}
More generally, we can deduce that the property of \emph{`being a max-functional' (or min-functional)} is in fact inherent to all \emph{`tail-ordering indices'}. To make this precise, let us consider the following natural order on distribution tails. For two distribution functions $F$ and $G$ with upper endpoints $x^F, x^G \in \real \cup \{\infty\}$ we say that $G$ \textit{has heavier tail than} $F$ and write $F <_t G$ if
\begin{align*}
\text{either } \quad x^F < x^G \qquad \text{or} \qquad x^F = x^G = x^* \text{ and } \lim_{x \to x^*} \frac{ \overline{F} (x)}{ \overline{G} (x)} = 0 .
\end{align*}
We say that $F$ and $G$ are \textit{tail equivalent} and write $F \sim_t G$ if they share the same upper endpoint $x^F=x^G=x^* \in \real \cup \lbrace \infty \rbrace$ and
\begin{align*}
\underset{x \rightarrow x^*}{\lim} \frac{\overline{F}(x)}{\overline{G}(x)} \in (0, \infty).
\end{align*}
Note that ``$<_t$'' defines a strict partial order on any set of distribution functions $\cF$ and that for tail equivalent $F$ and $G$  neither $F <_t G$ nor $G <_t F$ can hold. The following proposition shows that a functional which respects the tail order  ``$<_t$'' is a max-functional.

\begin{prop}  \label{prop:tailordering}
Let $T: \cF \to \real$ be a functional that satisfies for all $F,G \in \cF$
\begin{align*}
T(F) - T(G)  \left\lbrace
\begin{array}{ll}
\leq 0 & \text{if } F <_t G, \\
\geq 0 & \text{if } G <_t F, \\
= 0 & \text{else}.
\end{array}
\right.
\end{align*}
Then $T$ is a max-functional.
\end{prop}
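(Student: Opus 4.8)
The plan is to verify the defining identity of a max-functional directly: given $F_0, F_1 \in \cF$ and $\lambda \in (0,1)$, write $F_\lambda := \lambda F_1 + (1-\lambda) F_0$ and show $T(F_\lambda) = \max(T(F_0), T(F_1))$. The key observation is that for a mixture, the tail order behaves predictably: $F_\lambda$ should be tail equivalent to the heavier of $F_0$ and $F_1$, or strictly heavier only in degenerate edge cases that still respect the ``else'' branch. So first I would do a case analysis based on the relationship between $F_0$ and $F_1$ under ``$<_t$''.

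First, suppose $F_0 <_t F_1$ (the case $F_1 <_t F_0$ being symmetric), so that $\max(T(F_0),T(F_1)) = T(F_1)$ by hypothesis. I need to compare $F_\lambda$ with $F_1$ and with $F_0$ under the tail order. The survival function of the mixture is $\overline{F_\lambda} = \lambda \overline{F_1} + (1-\lambda)\overline{F_0}$. Its upper endpoint is $x^{F_\lambda} = \max(x^{F_0}, x^{F_1}) = x^{F_1}$ since $x^{F_0} \le x^{F_1}$. If $x^{F_0} < x^{F_1} = x^*$, then near $x^*$ we have $\overline{F_0} \equiv 0$, so $\overline{F_\lambda}(x)/\overline{F_1}(x) \to \lambda \in (0,\infty)$, giving $F_\lambda \sim_t F_1$; hence neither $F_\lambda <_t F_1$ nor $F_1 <_t F_\lambda$, and the ``else'' branch forces $T(F_\lambda) = T(F_1)$. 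If instead $x^{F_0} = x^{F_1} = x^*$ and $\overline{F_0}(x)/\overline{F_1}(x) \to 0$, then $\overline{F_\lambda}(x)/\overline{F_1}(x) = \lambda + (1-\lambda)\overline{F_0}(x)/\overline{F_1}(x) \to \lambda \in (0,\infty)$, so again $F_\lambda \sim_t F_1$ and $T(F_\lambda) = T(F_1)$. Either way the claim holds in this case.

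Next I would handle the case where neither $F_0 <_t F_1$ nor $F_1 <_t F_0$, i.e.\ the ``else'' branch applies to the pair $(F_0,F_1)$, so $T(F_0) = T(F_1) = t$ and $\max(T(F_0),T(F_1)) = t$. Here the hypothesis of the proposition does not immediately tell us $F_\lambda \sim_t F_i$; instead I would show that $F_\lambda$ cannot be strictly heavier or lighter than both $F_0$ and $F_1$ simultaneously unless the tail ratio limits fail to exist — but note the definition of ``$<_t$'' requires the limit $\overline{F}/\overline{G} \to 0$ to actually exist. It suffices to argue: $x^{F_\lambda} = \max(x^{F_0},x^{F_1})$, and if, say, $x^{F_0} = x^{F_1} = x^*$, then from $\overline{F_\lambda} = \lambda \overline{F_1} + (1-\lambda)\overline{F_0} \ge \lambda\overline{F_1}$ and $\ge (1-\lambda)\overline{F_0}$ we get $\overline{F_\lambda}(x)/\overline{F_0}(x) \ge 1-\lambda > 0$, so $F_\lambda$ is not strictly lighter than $F_0$; similarly not strictly lighter than $F_1$. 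Thus $F_0 \not<_t F_\lambda$ would require $F_\lambda \sim_t F_0$ or $F_\lambda <_t F_0$ fails — more carefully, I claim $F_0 <_t F_\lambda$ is impossible because that needs $\overline{F_0}/\overline{F_\lambda} \to 0$, contradicting $\overline{F_0}/\overline{F_\lambda} \le 1/(1-\lambda)$; wait, that bound is the wrong direction, so I instead use $\overline{F_\lambda} \ge (1-\lambda)\overline{F_0}$ to get $\overline{F_0}/\overline{F_\lambda} \le 1/(1-\lambda)$, which does not force the limit to zero but also does not preclude it. The cleaner route: apply the hypothesis to the pairs $(F_0, F_\lambda)$ and $(F_1, F_\lambda)$ and combine. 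Since $\overline{F_\lambda} \ge \lambda \overline{F_1}$, we have $\overline{F_1}/\overline{F_\lambda} \le 1/\lambda$, so $F_1 <_t F_\lambda$ is impossible (it would need this ratio $\to 0$... no).

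I realize the genuinely delicate point — and the main obstacle — is that ``$<_t$'' is only a \emph{partial} order, so the ``else'' branch lumps together tail-equivalent pairs \emph{and} pairs that are simply incomparable (limits not existing). To make the argument airtight I would therefore argue as follows: always $x^{F_\lambda} = \max(x^{F_0}, x^{F_1})$, so WLOG $x^{F_\lambda} = x^{F_1} =: x^*$. From $\overline{F_\lambda} \ge \lambda \overline{F_1}$ near $x^*$, the ratio $\overline{F_1}(x)/\overline{F_\lambda}(x)$ stays bounded below by... no, bounded \emph{above} by $1/\lambda$ — but to exclude $F_1 <_t F_\lambda$ I need $\overline{F_1}/\overline{F_\lambda} \not\to 0$, which follows from $\overline{F_1}/\overline{F_\lambda} \ge \lambda\overline{F_1}/(\lambda\overline{F_1} + (1-\lambda)\overline{F_0})$; when $x^{F_0} < x^*$ this is eventually $1$, and when $x^{F_0} = x^*$ one must handle it via the relation between $F_0$ and $F_1$. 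The workable structure: if $F_0, F_1$ are comparable use case 1; if incomparable, then $x^{F_0} = x^{F_1} = x^*$ (else they'd be comparable) and $\overline{F_0}/\overline{F_1}$ has no limit, yet $T(F_0) = T(F_1)$, and I must show $F_\lambda$ is tail-incomparable or tail-equivalent to each, which I would extract from the sandwich $(1-\lambda)\overline{F_0} \le \overline{F_\lambda} \le \overline{F_0} + \overline{F_1}$ together with the analogous bound involving $\overline{F_1}$, concluding $\overline{F_i}/\overline{F_\lambda}$ is bounded away from $0$ along the relevant sequence, hence $F_i \not<_t F_\lambda$, and symmetrically $F_\lambda \not<_t F_i$; the ``else'' branch then yields $T(F_\lambda) = T(F_i) = t$. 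I expect the referee-proof write-up of this incomparable case — ensuring every invocation of ``$<_t$'' respects the existence-of-limit clause — to be where most of the care is needed; the comparable case is short and clean.
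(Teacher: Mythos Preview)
Your approach is correct and matches the paper's: a case split on whether $F_0 <_t F_1$, $F_1 <_t F_0$, or neither, with the comparable case handled exactly as you do via $\overline{F_\lambda}/\overline{F_1} = \lambda + (1-\lambda)\,\overline{F_0}/\overline{F_1} \to \lambda$ to get $F_\lambda \sim_t F_1$. For the incomparable case the paper is more direct than your sandwich argument: it only compares $F_\lambda$ with $F_1$, noting that $\overline{F_\lambda}/\overline{F_1} \ge \lambda$ rules out $F_\lambda <_t F_1$, while $F_1 <_t F_\lambda$ would force $\overline{F_0}/\overline{F_1} \to \infty$, i.e.\ $F_1 <_t F_0$, contradicting the case hypothesis---so the ``else'' branch gives $T(F_\lambda)=T(F_1)=\max(T(F_0),T(F_1))$ immediately.
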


\begin{proof}
Let $F_0, F_1 \in \cF$ and set $F_\lambda := \lambda F_1 + (1-\lambda) F_0$ for $\lambda \in (0,1)$. We distinguish three cases. If $F_0 <_t F_1$, we have $x^{F_\lambda} = x^{F_1} \geq x^{F_0}$ and the identity
\begin{align*}  
\frac{\overline{F}_\lambda (x)}{\overline{F}_1 (x)} =  \lambda + (1-\lambda) \frac{\overline{F}_{0} (x)}{\overline{F}_1 (x)}
\end{align*}
for $x < x^{F_1}$ implies $F_\lambda \sim_t F_1$. Hence, neither $F_\lambda <_t F_1$ nor $F_1 <_t F_\lambda$ can be true. Together with $T(F_0) \leq T(F_1)$ we may conclude $T(F_\lambda) = T(F_1) = \max ( T(F_0), T(F_1) )$. By symmetry, the case $F_1 <_t F_0$ can be treated analogously. In the remaining case we have neither $F_0 <_t F_1$ nor $F_1 <_t F_0$, so $x^{F_1} = x^{F_0} = x^{F_\lambda} =: x^*$ must hold. Consequently, 
\begin{align*}
\liminf_{x \to x^*} \frac{\overline{F}_\lambda (x)}{\overline{F}_1 (x)} \geq \lambda > 0 \quad \text{ and } \quad \limsup_{x \to x^*} \frac{\overline{F}_\lambda (x)}{\overline{F}_1 (x)} < \infty,
\end{align*}
where the latter follows as the tail of $F_0$ is not heavier than the tail of $F_1$. This implies that neither $F_1 <_t F_\lambda$ nor $F_\lambda <_t F_1$ can hold true, which gives $T(F_\lambda) = T(F_1) = \max (T(F_0) , T(F_1))$ and concludes the proof.
\end{proof}

Another instance of a tail-ordering functional in the sense of Proposition~\ref{prop:tailordering} is the \emph{$\mathcal{M}$-index} as introduced in \cite{Cadena2016}. If it exists, it is the unique $\rho \in \RR$ such that 
\begin{align*} 
\lim_{x \to \infty} \frac{\overline F (x)}{x^{\rho + \eps}} = 0 \quad \text{and} \quad  \lim_{x \to \infty} \frac{\overline F (x)}{x^{\rho - \eps}} = \infty  \quad \text{ for all } \eps > 0 .
\end{align*}
It is easily seen that the $\mathcal{M}$-index coincides with the index of regular variation for distribution functions $F$ with regularly varying tail function $\overline F$. As it sorts survival functions according to their power law decay,  Proposition~\ref{prop:tailordering} implies that the $\mathcal{M}$-index is a max-functional.

\paragraph{Extreme value index}

A central characteristics of extreme value theory is the extreme value index, which classifies the limiting behaviour of rescaled maxima of growing samples from a distribution. More precisely, if there exist suitable location-scale normings $a_n>0$, $b_n\in \real$ such that the distribution functions $ F_n(x) := F^n(a_n x + b_n)$ converge weakly to a non-degenerate distribution function $G$, the limiting distribution function $G$ is necessarily a \emph{Generalized Extreme Value Distribution (GEV)}. This means that up to a location-scale normalization we have
\begin{align*}
  G(x)=G_\gamma(x)= \exp\{-(1+\gamma x)_+^{-1/\gamma}\}
\end{align*}
for some $\gamma=\gamma(F) \in \RR$, where $G_0(x)=\exp\{-e^{-x}\}$ for $\gamma=0$. The distribution $F$ is said to be in the \emph{max-domain of attraction} of $G=G_\gamma$ and the shape parameter $\gamma(F)$ is the \emph{extreme value index (EVI) of $F$}, cf.\ e.g.\ the monographs \citet{Resnick1987} and \citet{FerrHaan2006} for further background.

Let $\cF$ be the class of distribution functions which are in a max-domain of attraction for some GEV and consider first the EVI on the subclass of heavy-tailed distributions $\cF_+ = \lbrace F \in \cF \mid \gamma(F) > 0 \rbrace$. It is well-known that a distribution $F \in \cF$ has EVI $\gamma >0$ if and only if $\rho (\overline{F}) = -\gamma^{-1}$, where $\rho$ is the index of regular variation (cf.\ e.g.\ \citet[Proposition~1.11]{Resnick1987}). Consequently, the EVI $\gamma$ is also a max-functional on $\cF_+$.

When considering the class of light-tailed distributions, i.e.\ the case $\gamma(F) < 0 $, we need to specify an upper endpoint first in order to make `being a max/min-functional' meaningful for the EVI $\gamma$. To this end, let $\cF_{x^*} = \lbrace F \in \cF \mid \gamma(F) < 0, \, x^F=x^* \rbrace$. Again the EVI behaviour is governed by regular variation, since $\gamma(F)=-\gamma(F_*)$ with $F_*(x)=F(x^* - x^{-1})$ (cf.\ e.g.\ \citet[Proposition~1.13]{Resnick1987}). This shows that the EVI $\gamma$ is a min-functional on the class $\cF_{x^*}$. Note that it is crucial to assume equal upper endpoints, because otherwise it is not the EVI that dominates the tail behaviour, but the upper endpoint itself. \\

So far, we have looked at statistical indices that classify \emph{univariate} tail behaviour. However, similar issues arise when we want to quantify \emph{joint} tail behaviour in higher dimensions. Exemplary, let us consider the coefficient of tail dependence.

\paragraph{Coefficient of tail dependence}  \label{par:taildep}

In order to quantify the tail behaviour of a bivariate distribution function \citet{LedTawn1996,LedTawn1997} introduced the coefficient of tail dependence. For a bivariate distribution function $F$ of a random vector $(X_1, X_2)$ let us write $\overline F_i( x) := \mathbb{P} (X_i > x)$, $i=1,2$ and $\overline F( x) := \mathbb{P} (X_1 > x, X_2 > x)$ for the associated survival functions. Suppose there is an $\alpha > 0$ such that both $\overline{F}_1$ and $\overline{F}_2$ are regularly varying with index $-\alpha$. If in addition the joint survival function $\overline F$ is regularly varying with index $-\alpha/\eta$ for some $\eta \in (0,1]$, the coefficient $\eta=\eta(F)$ is called \textit{coefficient of tail dependence (CTD)} of the bivariate distribution $F$. Let us consider the CTD $\eta$ on the class of bivariate distributions
\begin{align*} 
\cF_\alpha  = \lbrace F \mid \rho(\overline{F}_1)=\rho(\overline{F}_2) = -\alpha, \, \rho(\overline{F})=-\alpha/\eta \text{ for some } \eta \in (0,1] \rbrace.
\end{align*}
Then it follows for $F,G \in \cF_\alpha$ that $\rho ( \lambda \overline F + (1-\lambda)\overline G) = - \alpha/\max(\eta(F),\eta(G))$  by the properties of the index of regular variation. Hence $\eta$ is a max-functional on $\cF_\alpha$.

\section{Proper scoring rules and max-functionals}  \label{sec:scoringrules}

In probabilistic forecasting, the whole distribution function instead of a single value is reported to the decision maker. Analogously to a scoring function, a \textit{scoring rule} then assigns a score based on the forecasted distribution and a realizing observation. The scoring rule is called \textit{proper} if its expected score with respect to a distribution is minimized whenever the forecast coincides with this distribution, see e.g.\ \citet{GneitRaft2007} or \citet{Dawid2007} for recent reviews.

In light of the results of Section~\ref{sec:main}, the following approach may seem reasonable to someone seeking information about a max-functional: Instead of single values, distribution functions are reported and evaluated via proper scoring rules. Then the max-functionals are computed from the forecasted distributions.

If the max-functional of interest is a property of the tail, e.g.\ the extreme value index, one could expect this method to work well as long as the scoring rule shows a good performance in the tails. In order to emphasize specific regions of interests, in particular the tails, \citet{GneitRanjan2011} and \citet{Diksetal2011} combined scoring rules with weight functions. Drawbacks and benefits of these weighted proper scoring rules were further studied in \citet{Lerchetal2017} and \citet{HolzKlar2018}, where the latter propose general construction principles. A theoretical problem is pointed out by \citet{Tailletal2019}, who show that weighted versions of the continuously ranked probability score (CRPS) cannot detect that two distributions are not tail equivalent.

This section shows that the problems detected by \citet{Tailletal2019} occur also for max-functionals and do not depend on the specific choice of proper scoring rule. Simply put, the expected score difference of two distributions can be arbitrarily small while their values for a max-functional can be large. As previously, $\cF$ is a convex set of distribution functions on $\sO \subseteq \real^d$. In our notation we follow \citet{GneitRaft2007} as well as Section~\ref{sec:examples}.

\begin{definition} [Scoring rule]
A real-valued function $S: \cF \times \sO \rightarrow \real$ is called \textit{scoring rule} if for all $F \in \cF$ the mapping $y \mapsto S(F,y)$ is  $\cF$-integrable. The scoring rule $S$ is called \textit{proper} if $\bar{S}(F,F) \leq \bar{S}(G,F)$ holds for all $F,G \in \cF$. It is \textit{strictly proper} if it is proper and for any $F,G \in \cF$ the equality $\bar{S}(G,G) = \bar{S}(F,G)$ implies $G = F$.
\end{definition}

For clarity of presentation we require all scoring rules to be $\cF$-integrable, while \citet{GneitRaft2007} only require \textit{quasi-integrability}. The latter means that the expected score $\bar{S}(G,F)$ is well-defined (and not necessarily finite) for all $G,F \in \cF$. Our assumption of $\cF$-integrability is however only a minor restriction, which can be relaxed as discussed below.

A popular choice of scoring rule is the \emph{(weighted) continuous ranked probability score}, abbreviated by \emph{CRPS (wCRPS)}. For some weight function $w: \real \rightarrow [0, \infty)$ the wCRPS is defined via
\begin{align*}
\text{wCRPS}(F, y) = \int_{-\infty}^{\infty} w(x) (F(x) - \one{y \leq x} )^2 \dd x 
\end{align*}
and the CRPS is obtained in the special case, where $w$ is equal to one \citep{MathWink1976,GneitRanjan2011}. In order to emphasize the right tail, the choice $w(x) = \one{q \leq x}$ for some threshold $q \in \real$ can be used. Both wCRPS and CRPS are proper scoring rules as long as $\cF$ contains only distributions with finite first moments. In this case the CRPS is even strictly proper, while the wCRPS is only under additional assumptions, see \citet{GneitRaft2007}, \citet{GneitRanjan2011} and \citet{HolzKlar2018}.

As demonstrated by \citet[Section~2]{Tailletal2019}, the wCRPS is not able to clearly distinguish between different tail behavior. More precisely, given a distribution $G$ and $\varepsilon > 0$, it is always possible to construct a distribution $F$ that is not tail equivalent to $G$ and such that
\begin{align*}  
\vert \mathbb{E}\, \mathrm{wCRPS}(G, Y) - \mathbb{E}\, \mathrm{wCRPS} (F, Y) \vert \leq \varepsilon,
\end{align*}
where $Y$ has distribution $ G$. This results shows that for any distribution $G$ the tail can be modified while keeping the expected wCRPS $\varepsilon$-close to its minimum. As put by \citet{Tailletal2019} this means that the wCRPS is not a \textit{tail equivalent score}.

In the following we show that \emph{all} proper scoring rules fail to be tail equivalent in this sense. Moreover, we extend these findings to max-functionals, i.e.\ we show that no proper scoring rule is \textit{max-functional equivalent}. Both findings are immediate consequences of the subsequent continuity considerations for scoring rules.

\begin{definition}
A scoring rule $S : \cF \times \sO \rightarrow \real$ is called \textit{diagonal-continuous at} $G$ if for all $F \in \cF$
\begin{align*}
\bar{S}( \lambda F +  (1-\lambda)G, G) \rightarrow \bar{S}(G,G) \qquad \text{for } \lambda \downarrow 0.
\end{align*}
\end{definition}

\begin{lemma}  \label{lem:diagconti}
If $S : \cF \times \sO \rightarrow \real$ is a proper scoring rule, it is diagonal-continuous at each $G \in \cF$.
\end{lemma}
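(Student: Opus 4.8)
The plan is to exploit properness twice, from both sides, to trap the quantity $\bar S(\lambda F + (1-\lambda)G, G)$ between two expressions that converge to $\bar S(G,G)$ as $\lambda \downarrow 0$. Write $G_\lambda := \lambda F + (1-\lambda)G$ and note $G_\lambda \in \cF$ by convexity. First I would observe that, by linearity of the integral in its measure argument, $\bar S(H, G_\lambda) = \lambda \bar S(H, F) + (1-\lambda)\bar S(H, G)$ for any fixed forecast $H \in \cF$; this uses $\cF$-integrability of $S$, which is exactly why that assumption was imposed. Applying properness of $S$ with true distribution $G_\lambda$ and forecast $G$ gives $\bar S(G_\lambda, G_\lambda) \le \bar S(G, G_\lambda)$, i.e.
\begin{align*}
\lambda \bar S(G_\lambda, F) + (1-\lambda)\bar S(G_\lambda, G) \le \lambda \bar S(G, F) + (1-\lambda)\bar S(G, G).
\end{align*}

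Next, to control the stray term $\bar S(G_\lambda, F)$, I would apply properness of $S$ again, this time with true distribution $F$ and forecast $G_\lambda$, yielding $\bar S(F, F) \le \bar S(G_\lambda, F)$, and with true distribution $F$ and forecast $G$, yielding $\bar S(F,F) \le \bar S(G,F)$. Substituting $\bar S(F,F) \le \bar S(G_\lambda,F)$ into the displayed inequality above (on the larger side is not directly what we want, so I would instead rearrange): the inequality gives
\begin{align*}
(1-\lambda)\bigl(\bar S(G_\lambda, G) - \bar S(G,G)\bigr) \le \lambda\bigl(\bar S(G,F) - \bar S(G_\lambda, F)\bigr) \le \lambda\bigl(\bar S(G,F) - \bar S(F,F)\bigr),
\end{align*}
so $\limsup_{\lambda \downarrow 0} \bar S(G_\lambda, G) \le \bar S(G,G)$. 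For the reverse bound, properness with true distribution $G$ and forecast $G_\lambda$ gives directly $\bar S(G,G) \le \bar S(G_\lambda, G)$, hence $\bar S(G_\lambda, G) \ge \bar S(G,G)$ for every $\lambda$, so $\liminf_{\lambda \downarrow 0}\bar S(G_\lambda, G) \ge \bar S(G,G)$. Combining the two gives $\bar S(G_\lambda, G) \to \bar S(G,G)$, which is the claim.

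The only genuine subtlety — rather than an obstacle — is making sure the bound $\lambda(\bar S(G,F) - \bar S(F,F)) \to 0$ is legitimate, which requires $\bar S(G,F)$ and $\bar S(F,F)$ to be finite real numbers; this is guaranteed by the standing $\cF$-integrability assumption on scoring rules, and it is precisely the point where the remark following the definition (that quasi-integrability would need to be handled separately) becomes relevant. Everything else is a two-line manipulation of the defining inequality of properness together with the affine dependence of $\bar S(\cdot, \lambda F + (1-\lambda)G)$ on $\lambda$.
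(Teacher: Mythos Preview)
Your proof is correct and is essentially the same argument as the paper's: both apply properness at $G_\lambda$ and at $F$ to obtain the upper bound $(1-\lambda)\bigl(\bar S(G_\lambda,G)-\bar S(G,G)\bigr)\le \lambda\bigl(\bar S(G,F)-\bar S(F,F)\bigr)$, and the lower bound comes (implicitly in the paper, explicitly in your write-up) from properness at $G$. The paper packages the two bounds into a single absolute-value inequality and attributes the idea to Nau (1985), but the content is identical.
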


\begin{proof}
We proceed similar to the proof of \citet[Proposition~3]{Nau1985}. Let $F,G \in \cF$ and denote $F_\lambda := \lambda F + (1-\lambda) G$ for $\lambda \in [0,1)$. We obtain the inequality
\begin{align*}
(1-\lambda) \bar{S} (F_\lambda, G) &= \bar{S}(F_\lambda, F_\lambda) -  \lambda \bar{S}(F_\lambda, F) \\
&\leq \bar{S}(G,F_\lambda) - \lambda \bar{S}(F, F) \\
&= (1-\lambda) \bar{S}(G, G) + \lambda \big( \bar{S}(G,F) - \bar{S} (F, F) \big),
\end{align*}
since $S$ is a proper scoring rule. Rearranging leads to 
\begin{align*}
\vert \bar{S}( \lambda F  + (1-\lambda)G , G) - \bar{S}(G,G) \vert \leq \frac{\lambda}{1 - \lambda} \, \big( \bar{S}(G,F) - \bar{S} (F, F) \big),
\end{align*}
for $\lambda \in [0,1)$ and the right hand side of this equation vanishes as $\lambda \downarrow 0$.
\end{proof}

The argument of the proof of Lemma~\ref{lem:diagconti} can be extended to quasi-integrable scoring rules as considered in \citet{GneitRaft2007}. The additional requirement is that the expected score $\bar S(G,F)$ is finite and that $S$ is \emph{regular}, i.e.\ $\bar{S}(F,F) \in \real$ for all $F \in \cF$.

We can now turn our attention to the main result of this section. It is motivated by the observation that tail equivalence and max-functionals lead to a similar kind of discontinuity on  the convex combinations $\lambda F + (1-\lambda)G $, which intuitively conflicts with the diagonal-continuity of proper scoring rules. This allows for an extension of the results of \citet{Tailletal2019}. Recall the tail-ordering from Section~\ref{sec:examples} and that we assume $\cF$ to be convex.

\begin{theorem} \label{th:generalscoringdiff}
Let $S: \cF \times \real \rightarrow \real$ be a proper scoring rule and $G \in \cF$. Then the following are true.
\begin{enumerate}[label=(\alph*)]
	\item If there is an $F \in \cF$ with heavier tail than $G$, then for all $\varepsilon > 0$ there is an $F_\varepsilon \in \cF$ that is not tail equivalent to $G$ and such that
\begin{align*}
\vert \bar{S}(F_\varepsilon , G) - \bar{S}(G,G) \vert \leq \varepsilon .
\end{align*}
\item Let $T: \cF \rightarrow \real$ be a max-functional. If there is an $F \in \cF$ with $T(F) > T(G)$, then for all $\varepsilon > 0$ there is an $F_\varepsilon \in \cF$ such that $T(F_\varepsilon) =T(F) > T(G)$, while   
	\begin{align*}
\vert \bar{S}(F_\varepsilon , G) - \bar{S}(G,G) \vert \leq \varepsilon .
\end{align*}
\end{enumerate}
\end{theorem}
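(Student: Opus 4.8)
The key observation is that both parts follow from Lemma~\ref{lem:diagconti} (diagonal-continuity of proper scoring rules) once we exhibit, for each claim, a suitable element of the segment $\lambda F + (1-\lambda) G$ whose tail behaviour (resp.\ max-functional value) matches that of $F$ rather than $G$. So the plan is: first establish that the convex mixture $F_\lambda := \lambda F + (1-\lambda) G$ inherits the ``heavier'' side for every $\lambda \in (0,1)$, and then invoke diagonal-continuity to make $\bar S(F_\lambda, G)$ as close to $\bar S(G,G)$ as we wish.

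For part~(b) this is almost immediate: since $T$ is a max-functional and $T(F) > T(G)$, the defining identity gives $T(F_\lambda) = \max(T(F), T(G)) = T(F)$ for \emph{every} $\lambda \in (0,1)$. By Lemma~\ref{lem:diagconti}, $\bar S(F_\lambda, G) \to \bar S(G,G)$ as $\lambda \downarrow 0$, so given $\varepsilon > 0$ we simply pick $\lambda$ small enough that $\vert \bar S(F_\lambda, G) - \bar S(G,G) \vert \le \varepsilon$ and set $F_\varepsilon := F_\lambda$; this $F_\varepsilon$ lies in $\cF$ by convexity and satisfies $T(F_\varepsilon) = T(F) > T(G)$.

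For part~(a) the structural step is to show that $F <_t G$ implies $F_\lambda \sim_t G$ for all $\lambda \in (0,1)$, so that in particular $F_\lambda$ is \emph{not} tail equivalent to $G$ — wait, that is the wrong direction. The correct route is: we want $F_\varepsilon$ \emph{not} tail equivalent to $G$, and $F$ has heavier tail than $G$, i.e.\ $G <_t F$. Then by the same reasoning as in the proof of Proposition~\ref{prop:tailordering} (the case analysis there, with the roles arranged so that $F$ is the heavier distribution), one has $x^{F_\lambda} = x^F$ and $\overline F_\lambda(x) / \overline F(x) = \lambda + (1-\lambda)\,\overline G(x)/\overline F(x) \to \lambda \in (0,\infty)$ as $x \to x^F$, hence $F_\lambda \sim_t F$; since $G <_t F$ means $F$ and $G$ are not tail equivalent and tail equivalence is transitive on its domain, $F_\lambda$ is not tail equivalent to $G$ for any $\lambda \in (0,1)$. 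Then again apply Lemma~\ref{lem:diagconti}: choose $\lambda$ small with $\vert \bar S(F_\lambda, G) - \bar S(G,G)\vert \le \varepsilon$ and put $F_\varepsilon := F_\lambda$.

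The only genuine subtlety — and the place to be careful — is the tail computation in part~(a): one must check that $F_\lambda \sim_t F$ (equivalently $\overline F_\lambda / \overline F$ has a limit in $(0,\infty)$) handling the subcases $x^F > x^G$ and $x^F = x^G$ separately, exactly as in Proposition~\ref{prop:tailordering}, and then note transitivity of $\sim_t$ together with $F \not\sim_t G$ to conclude $F_\lambda \not\sim_t G$. Everything else is a direct application of diagonal-continuity plus convexity of $\cF$.
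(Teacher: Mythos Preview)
Your proposal is correct and follows essentially the same strategy as the paper: set $F_\lambda = \lambda F + (1-\lambda)G$, invoke Lemma~\ref{lem:diagconti} to make $\bar S(F_\lambda,G)$ close to $\bar S(G,G)$, and verify that $F_\lambda$ inherits the tail behaviour (resp.\ max-functional value) of $F$ for every $\lambda\in(0,1)$. The only cosmetic difference is in part~(a): the paper computes $\overline F_\lambda/\overline G = (1-\lambda) + \lambda\,\overline F/\overline G \to \infty$ directly to see $F_\lambda \not\sim_t G$, whereas you show $F_\lambda \sim_t F$ and then appeal to transitivity of $\sim_t$ together with $F \not\sim_t G$; both arguments are equally valid, the paper's being marginally more direct.
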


\begin{proof}
Fix $G \in \cF$ and let $S$ be a proper scoring rule. For $F \in \cF$ set $F_\lambda := \lambda F + (1-\lambda) G$. Since $\cF$ is convex, we have $ F_\lambda \in \cF$ for all $\lambda \in [0,1]$. Moreover, $S$ is diagonal-continuous at $G$ by Lemma~\ref{lem:diagconti}, implying that for all $\varepsilon > 0$ and $F \in \cF$ we can find a $\delta \in (0,1]$ such that $ \vert  \bar{S}( F_\lambda , G) - \bar{S}(G,G) \vert \leq \varepsilon$ holds for all $\lambda \in [0, \delta]$. Now assume there is an $F \in \cF$ with heavier tail than $G$. If $x^F > x^G$, we have $x^{F_\lambda} > x^G$ for all $\lambda \in (0,1]$. If on the other hand $x^F = x^G = x^*$ we have
\begin{align*}
\frac{\overline F_\lambda (x)}{ \overline G(x)}  = (1- \lambda) + \lambda \frac{\overline F(x)}{\overline G(x)} ,
\end{align*}
for $x < x^*$ and the right-hand side goes to infinity as $x \rightarrow x^*$. Hence, in both cases the distributions $F_\lambda$ cannot be tail equivalent to $G$ for $\lambda \in (0,1]$, showing part~(a). For the second part, let $F \in \cF$ satisfy $T(F) > T(G)$. Since $T$ is a max-functional, $T(F_\lambda) = T(F) > T(G)$ holds for $\lambda \in (0, 1]$, proving part~(b).
\end{proof}

The first part of Theorem~\ref{th:generalscoringdiff} shows that the lack of tail equivalence is not a flaw of the wCRPS, but inherent to \emph{all} proper scoring rules (up to integrability assumptions). The second part extends this non-equivalence of proper scoring rules to max-functionals. Loosely speaking, this means that there can not only be pairs of not tail equivalent distributions, but also pairs of distributions with arbitrarily different max-functional values, and both having almost identical expected scores.

\section{Discussion}  \label{sec:discuss}

Recent research investigates the elicitation properties of widely used statistical functionals. When the emphasis lies on an understanding of tail properties, typical functionals to characterize this behaviour fall into the class of max-functionals. In particular, all functionals that order distribution tails belong to this class (cf.~Proposition~\ref{prop:tailordering}). We show here that max-functionals do not only fail to be elicitable  (Theorem~\ref{th:Criterion2}), but have in fact infinite elicitation complexity in a wide range of settings (Theorem~\ref{th:complexity}). This contrasts situations in which the non-elicitability can be alleviated by a finite elicitation complexity as, for instance, is the case for the variance or the Expected Shortfall \citep{FronKash2018,FissZieg2016}.
Rather it bears resemblance to the mode, which is non-elicitable  and has infinite elicitation complexity as well, see \citet{Hein2014} and \citet{DearFron2019}. As an alternative to point forecasts, we may allow the max-functional to be reported via the entire distribution function. In principle such probabilistic forecasts can be compared using proper scoring rules. However, Theorem~\ref{th:generalscoringdiff} demonstrates that the difference of expected scores can be arbitrarily small, although the difference of  max-functional values may be large. The latter complements recent findings of \citet{Tailletal2019} and extends them from the wCRPS to all integrable proper scoring rules.

Collectively, our results cast doubt on the ability of expected scores to distinguish different tail regimes in the sense of max-functional values as they are routinely considered in extreme value theory. From an applied viewpoint this means that expected scores are not suitable to access such tail information for regression, M-estimation or comparative forecast evaluation. Thereby, our results provide a new perspective on the limitations of weighted scoring rules, adding to  practical intricacies described in \citet{Lerchetal2017}, \citet{HolzKlar2018} and \citet{FriedThor2012}. What might come to rescue though, is that the max-functionals themselves are often not the main concern in applications, but rather a tool to guide the extrapolation from intermediate order statistics to the functionals of interest. In practice, these functionals may include a high quantile or a tail expectation such as Expected Shortfall, which can be interpreted as tail properties `less extreme' than max-functionals and with better elicitablity properties. 

Lastly, however, we would like to point out that non-elicitability is not the only problem in sound forecast evaluation and many open questions remain. Even when elicitability is granted, i.e. the considered statistical functional is the unique minimizer of an expected score, there is no guarantee that the corresponding minimization problem will be well-posed. For instance, poorly behaved scoring functions may give rise to high variances of realized average scores, in which case practical sample sizes may be per se too low for an adequate assessment of competing forecasts. Due to the many challenges in forecast evaluation with an emphasis on distribution tails, we anticipate that it will remain an active area of research.

\section*{Acknowledgments}

Jonas Brehmer gratefully acknowledges support by the German Research Foundation (DFG) through the Research Training Group RTG 1953. 
The authors would also like to thank Tilmann Gneiting, Fabian Kr\"uger and Martin Schlather for their valuable comments.

\bibliographystyle{apalike}

\end{document}